\newtheorem{Thm}{Theorem}[section]
\newtheorem{Cor}[Thm]{Corollary}
\newtheorem{Lem}[Thm]{Lemma}
\newtheorem{Prop}[Thm]{Proposition}
\newtheorem{Def}[Thm]{Definition}
\newtheorem{Rem}[Thm]{Remark}
\theoremstyle{remark}
\newcommand{\mfk}{\mathfrak}
\newcommand{\mfkg}{\mathfrak{g}}
\newcommand{\mb}{\mathbb}
\newcommand{\mc}{\mathcal}
\newcommand{\Ld}{\Lambda}
\newcommand{\mfkhp}{\mfk{h}^+(\mathscr{A})}
\newcommand{\mfkhm}{\mfk{h}^-(\mathscr{A})}
\newcommand{\mfkcp}{\mfk{c}^+(\mathscr{A})}
\newcommand{\mfkcm}{\mfk{c}^-(\mathscr{A})}
\newcommand{\mfkhpm}{\mfk{h}^{\pm}(\mathscr{A})}
\newcommand{\mchp}{\mc{H}^+(\mathscr{A})}
\newcommand{\mchm}{\mc{H}^-(\mathscr{A})}
\newcommand{\mcd}{\mc{D}(\mathscr{A})}
\newcommand{\mcc}{\mc{C}(\mathscr{A})}
\newcommand{\mccp}{\mc{C}^+(\mathscr{A})}
\newcommand{\mccm}{\mc{C}^-(\mathscr{A})}
\newcommand{\mcf}{\mc{F}}
\newcommand{\mcp}{\mc{P}}
\newcommand{\mci}{\mc{I}}
\newcommand{\mcj}{\mc{J}}
\newcommand{\mcire}{\mc{I}^{re}}
\newcommand{\mciim}{\mc{I}^{im}}
\newcommand{\mbc}{\mb{C}}
\newcommand{\mbn}{\mb{N}}
\newcommand{\mbz}{\mb{Z}}
\newcommand{\lra}{\longrightarrow}
\newcommand{\ma}{\mathscr{A}}
\newcommand{\ges}{\geqslant}
\begin{document}

\begin{center}

{\Large \bf The double Ringel-Hall algebra on a hereditary abelian
finitary length category}

\bigskip

{\large {\bf Rujing Dou, Qunhua Liu, Jie Xiao}}
\bigskip

\end{center}

\begin{center}{ {Department of Mathematical Sciences, Tsinghua
University, Beijing 100084, P.R.China.}

{Email address: drj05@mails.tsinghua.edu.cn,
lqh98@mails.tsinghua.edu.cn, jxiao@math.tsinghua.edu.cn} }
\end{center}

\abstract{In this paper, we study the category
$\mathscr{H}^{(\rho)}$ of semi-stable coherent sheaves of a fixed
slope $\rho$ over a weighted projective curve. This category has
nice properties: it is a hereditary abelian finitary length
category. We will define the Ringel-Hall algebra of
$\mathscr{H}^{(\rho)}$ and relate it to generalized Kac-Moody Lie
algebras. Finally we obtain the Kac type theorem to describe the
indecomposable objects in this category, i.e. the indecomposable
semi-stable sheaves.}
 \\
{\small\bf Key words:} Ringel-Hall algebra, generalized Kac-Moody
Lie algebra, coherent sheaf, semi-stability.

\section{Introduction}

It is known that by Ringel \cite{R1} and Green \cite{G}, the
composition  subalgebra of the Ringel-Hall algebra of a finite
dimensional hereditary algebra $\Lambda$ over a finite field $k$ is
isomorphic to the positive part of the quantized enveloping algebra
of the Kac-Moody Lie algebra $\mfkg$ determined by the Euler form.
By Sevenhant and Van den Bergh \cite{SV}, and Deng and Xiao
\cite{DX1}, the double Ringel-Hall algebra with a new gradation is
essentially isomorphic to the quantized enveloping algebra of a
generalized Kac-Moody algebra $\mfkg',$ which admits imaginary
simple roots and contains $\mfkg$ as a Lie subalgebra.  By Kac
\cite{K}, the set of dimension vectors of the indecomposable modules
of $\Lambda$ coincides with the set of positive roots of $\mfkg$.
Moreover, if $\alpha$ is a real root, there is a unique, up to
isomorphism, indecomposable module with dimension vector $\alpha$.
In \cite{DX2}, it was shown that the Ringel-Hall algebra approach
provides a new and self-contained proof of the above Kac theorem.
The case for valued quivers with loops was considered in \cite{WX}.

We generalize these results from the module category mod$(\Lambda)$
to an arbitrary hereditary abelian finitary length $k$-category
$\ma$. Recall that a category $\ma$ is called finitary if Hom-spaces
and Ext-spaces are finite sets. It is called a length category if
any object has a finite Jordan-H\"{o}lder series. To the category
$\ma$ we associate the composition Ringel-Hall algebra $\mcc$ and
the double Ringel-Hall algebra $\mcd$. By the Euler form on $\ma$ we
define two Borcherds-Cartan matrices $C_0$ and $C$, and associate to
them two generalized Kac-Moody Lie algebras $\mfkg(C_0)$ and
$\mfkg(C)$.

We prove that the double composition algebra $\mcc$ of the
Ringel-Hall algebra is isomorphic to the quantized enveloping
algebra of $\mfkg(C_0)$ (see Theorem \ref{compositiontheorem}); and
the double Ringel-Hall algebra $\mcd$ essentially gives a
realization of the quantized enveloping algebra of $\mfkg(C)$ (see
Theorem \ref{doubletheorem}). Roughly speaking, by enlarging the
Cartan part of $\mcd$ and giving a new gradation we obtain the
quantized enveloping algebra of $\mfkg(C)$. Furthermore, the set of
dimension vectors of the indecomposable objects in $\ma$ is the
union of the positive roots of $\mfkg(C_0)$ and $W_0(\cup_{s\geq
2}s\mci^{im})$, where $W_0$ is the Weyl group of $\mfkg(C_0)$ and
$\mci^{im}$ is the set of imaginary
simple roots of $\mfkg(C_0)$ 
(see Theorem \ref{maintheoremtwo}). Moreover, if $\alpha$ is a
real root, there is a unique, up to isomorphism, indecomposable
object in $\ma$ with dimension vector $\alpha$.

As a special case, we consider the category $\mathscr{H}^{(\rho)}$
of semi-stable coherent sheaves of a fixed slope $\rho$ over a
weighted projective curve. All the above results can apply to this
category.

The paper is organized as follows. In Section 2, we recall the basic
knowledge of generalized Kac-Moody Lie algebra and its quantized
enveloping algebra. In Section 3 we define the composition and the
double Ringel-Hall algebra of a hereditary abelian
finitary length category $\ma$. 
The structure of the composition and the double Ringel-Hall algebra,
and the relation with generalized Kac-Moody Lie algebras are studied
in Section 4. 
In Section 5 we apply our results to the category
$\mathscr{H}^{(\rho)}$ and classify the dimension vectors of the
indecomposable semi-stable sheaves of slope $\rho$.


Throughout the paper $k$ will be a fixed finite field
$\mathbb{F}_q$, and $v=\sqrt{q}$ be a complex number (and not a root
of unity).

 Finally we note that a deeper relation between the indecomposable
 coherent sheaves over weighted projective lines and the root system
 of the loop algebras of Kac-Moody algebras was found by
 Crawley-Boevey in \cite{CB}.

\bigskip
\section{Generalized Kac-Moody Lie algebras}

In this section we recall the definition of generalized Kac-Moody
Lie algebra and its quantized enveloping algebra. Generalized
Kac-Moody Lie algebras were introduced by Borcherds~\cite{B}, and
their quantized version was defined by Kang~\cite{Ka}. For reference
one sees also~\cite{SV} and~\cite{DX2}.

Let $\mci$ be an index set (possibly infinite or even
uncountable).

\begin{Def} A complex matrix $C=(c_{ij})_{i,j\in\mci}$ is called a
{\em Borcherds-Cartan matrix} if the following holds:

$(i)$ $c_{ii}=2$ or $c_{ii}\le 0$, for any $i\in\mci$;

$(ii)$ $c_{ij}\le0$, for any $i,j\in\mci$ and $i\ne j$;

$(iii)$ $c_{ij}\in\mb{Z}$, for any $i,j\in\mci$ and $c_{ii}=2$;

$(iv)$ $c_{ij}=0$ if and only if $c_{ji}=0$, for any $i,j\in\mci$.
\end{Def}

Set $\mci^{re}=\{i\in\mci: c_{ii}=2\}$ and
$\mci^{im}=\{i\in\mci:c_{ii}\le 0\}$. The index set $\mci$ is the
disjoint union of $\mcire$ and $\mciim$.

\begin{Def}
A Borcherds-Cartan matrix $C$ is called {\em symmetrizable}, if
there exists positive number $\varepsilon_i$ for $i\in\mci$
satisfying that $\varepsilon_i c_{ij}=\varepsilon_j c_{ji}$ for any
$i,j\in\mci$.
\end{Def}

Two symmetrizable Borcherds-Cartan matrices $C=(c_{ij})$ (with
symmetrization $\varepsilon_i$) and $C^\prime =(c_{ij}')$ (with
symmetrization $\varepsilon _i^\prime$) are identified, if they
correspond to the same symmetrization, namely $\varepsilon_i
c_{ij}=\varepsilon _i^\prime c _{ij}^\prime$ for any $i,j\in\mci$.

\begin{Rem} Under the above identification, a symmetrizable Borcherds-Cartan matrix is one-to-one
correspondent to a symmetric bilinear form
$(-,-):\mb{C}\mci\times\mbc\mci\lra\mbc$ satisfying that $(i,j)\le
0$ for any $i\ne j$ in $\mci$, and that if $(i,i)$ is positive then
$\frac{2(i,j)}{(i,i)}\in\mbz$. Such a bilinear form is called a {\em
generalized Kac-Moody bilinear form}. \label{matrixandbilinearform}
\end{Rem}

Indeed, given a symmetrizable Borcherds-Cartan matrix $C=(c_{ij})$
with symmetrization $\varepsilon_i$, the bilinear form defined by
$(i,j)=\varepsilon_ic_{ij}$, for any $i,j\in\mci$, is a
generalized Kac-Moody bilinear form. Conversely, one can associate
to a generalized Kac-Moody bilinear form
$(-,-):\mb{C}\mci\times\mbc\mci\lra\mbc$ a symmetrizable
Borcherds-Cartan matrix $C$ defined by $$c_{ij}=\begin{cases}
                     \frac{2(i,j)}{(i,i)}  &\text{if} (i,i)>0  \\
                     (i,j)  & \text{otherwise} \end{cases}$$
with symmetrization
$$\varepsilon_i=\begin{cases}
                   \frac{(i,i)}{2}  & \text{if}  (i,i)>0 \\
                   1 & \text{otherwise}.  \end{cases}$$
The pair $(\mci,(-,-))$ is called a {\em Borcherds datum}
following the notion Cartan datum of Lustig \cite{L}.

Recall that a complex matrix $C=(c_{ij})_{i,j\in\mci}$ is called a
{\em generalized Cartan matrix} provides that

$(i)$ $\mci$ is a finite set;

$(ii)$ $c_{ii}=2$, for any $i\in\mci$;

$(iii)$ $c_{ij}\in\mbz_{\le 0}$, for any $i, j\in\mci$ and $i\ne
j$;

$(iv)$ $c_{ij}=0$ if and only if $c_{ji}=0$, for any $i,j\in\mci$.

Clearly generalized Cartan matrices are Borcherds-Cartan matrices
with $\mci=\mcire$ being finite. In particular, symmetrizable
generalized Cartan matrices are symmetrizable Borcherds-Cartan
matrices.

\begin{Def}\label{generalized Kac-moody algebra} To a symmetrizable
Borcherds-Cartan matrix $C$, we associate a complex Lie algebra
$\mfk{g}(C)$, called the {\em generalized Kac-Moody Lie algebra},
which is generated by $\{e_i,f_i,h_i:i\in\mci\}$ with relations

$(i)$ $[h_i,h_j]=0$, $\forall\ i,j\in\mci$;

$(ii)$ $[h_i, e_j]=c_{ij}e_j$, $[h_i,f_j]=-c_{ij}f_j$, $\forall \
i,j\in\mci$;

$(iii)$ $[e_i,f_j]=\delta_{ij}h_i$, $\forall\ i,j\in\mci$;

$(iv)$ $(\text{ad}e_i)^{1-c_{ij}}e_i=0$,
$(\text{ad}f_i)^{1-c_{ij}}f_j=0$, $\forall\ i\in\mcire$ and
$j\in\mci$ with $i\ne j$;

$(v)$ $[e_i,e_j]=0$, $[f_i,f_j]=0$, $\forall\ i,j\in\mci$ with
$c_{ij}=0$.
\end{Def}

For $i\in\mcire$, we define a linear transformation
$\tilde{r}_i:\mbc\mci\lra\mbc\mci$ sending $j\in\mci$ to
$j-c_{ij}i$. The {\em Weyl Group} $W=W(C)$ of the generalized
Kac-Moody algebra $\mfk{g}(C)$ is the subgroup of $GL(\mbc\mci)$
generated by the reflections $\{\tilde{r}_i:i\in\mcire\}$. The
root system $\Delta=\Delta(C)$ of $\mfk{g}(C)$ can be described as
follows:
$$\Delta=\Delta_+\cup\Delta_-,\ \Delta_-=-\Delta_+,\
\Delta=\Delta^{re}\cup\Delta^{im},$$
$$\Delta^{re}=W(\mcire)=\{w(i):w\in W,\ i\in\mcire\},$$
$$\Delta^{im}=W(\mcf\cup-\mcf),$$
where $\mcf=\{0\ne \mu\in\mbn\mci:(\mu,i)\le 0,\forall\
i\in\mcire,\ \text{supp}(\mu) \text{ is
connected}\}\backslash\bigcup_{s\ge 2} s\mciim$, called the {\em
fundamental region}. Note that $i\in \mcire$ are real simple roots
and $i\in\mciim$ are imaginary simple roots.

\begin{Def} Let v be a complex number (not a root of unity). The
{\em quantized enveloping algebra} $U_v(\mfk{g}(C))$ of a
generalized Kac-Moody Lie algebra $\mfk{g}(C)$ is the algebra over
$\mbc$ generated by $\{E_i,F_i:i\in\mci\}$ and
$\{K_{\mu}:\mu\in\mbz\mci\}$ with relations
\label{quantizedgeneralizedKML}

$(i)$ $K_0=1,\ \ K_{\mu}K_{\nu}=K_{\mu+\nu},\ \ \forall\
\mu,\nu\in\mbz\mci;$

$(ii)$ $K_{\mu}E_i=v^{(\mu,i)}E_i K_{\mu},\ \
K_{\mu}F_i=v^{-(\mu,i)}F_i K_{\mu},\ \forall\ i\in\mci,\
\mu\in\mbz\mci;$

$(iii)$
$E_iF_j-F_jE_i=\delta_{ij}\frac{K_i-K_{-i}}{v_i-v_i^{-1}},\
\forall\ i,j\in\mci;$

$(iv)$ $\sum_{p=0}^{1-c_{ij}}(-1)^p \left[ \begin{array}{c} 1-c_{ij} \\
p\\ \end{array}\right]_{v_i} E_i^p E_j E_i^{1-c_{ij}-p}=0,\
\forall\ i\in\mcire,\ i\neq j\in\mci;$

$(iv)'$ $\sum_{p=0}^{1-c_{ij}}(-1)^p \left[ \begin{array}{c} 1-c_{ij} \\
p\\ \end{array}\right]_{v_i} F_i^p F_j F_i^{1-c_{ij}-p}=0,\
\forall\ i\in\mcire,\ i\neq j\in\mci;$

$(v)$ $E_iE_j-E_jE_i=0,\ F_iF_j-F_jF_i=0,\ \forall i,j\in\mci\
\text{with}\ c_{ij}=0;$ \\where $v_i=v^{\varepsilon_i}$
($\varepsilon_i$ are the symmetrization of $C$), and
$$[n]_{v_i}=\frac{v_i^n-v_i^{-n}}{v_i-v_i^{-1}},\ \ \ \ \ \ [n]_{v_i}!=\prod_{k=1}^n[k]_{v_i},$$
$$\left[\begin{array}{c}m\\n\end{array}\right]_{v_i}
=\frac{[m]_{v_i}!}{[m-n]_{v_i}![n]_{v_i}!}.$$
\end{Def}

The quantized enveloping algebra admits a natural triangle
decomposition $U_v(\mfk{g}(C))=U_v^-(\mfk{g}(C))\otimes
U_v^0(\mfk{g}(C))\otimes U_v^+(\mfk{g}(C))$, where the negative
part $U_v^-(\mfk{g}(C))$ is generated by $F_i$ and $K_{\mu}$, the
Cartan part by $K_{\mu}$, and the positive part by $E_i$ and
$K_{\mu}$. Define the formal character of $U_v^-(\mfk{g}(C))$ by
$$\text{ch}U_v^-(\mfk{g}(C))=\sum_{\mu\in\mbn
\mci}\text{dim}_\mathbb{C}U_v^-(\mfk{g}(C))_{-\mu}e(-\mu).$$
By~\cite{B} (see also~\cite{SV}), we have the following
proposition.

\begin{Prop}\label{formalcharacter}
The formal character of $U_v^-(\mfk{g}(C))$ is
$${\rm ch}U_v^-(\mfk{g}(C))=\prod_{\alpha\in\Delta^+}(1-e(-\alpha))^{-{\rm mult}_{\mfk{g}(C)}\alpha}.$$
\end{Prop}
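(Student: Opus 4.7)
The plan is to reduce the character computation to a PBW-type counting on a basis of $U_v^-(\mfk{g}(C))$ indexed by positive roots with multiplicities. First, I would construct, for each positive root $\alpha\in\Delta^+$ and each index $s\in\{1,\ldots,\text{mult}_{\mfk{g}(C)}\alpha\}$, a quantum root vector $F_{\alpha,s}\in U_v^-(\mfk{g}(C))_{-\alpha}$. For $\alpha\in\Delta^{re}=W(\mcire)$ these are obtained from the simple generators $F_i$ ($i\in\mcire$) via Lusztig-type braid operators attached to the Weyl group $W$. For $i\in\mciim$ the generator $F_i$ itself serves, and higher imaginary root vectors come from iterated $v$-commutators among the real and imaginary root vectors, in numbers dictated by $\text{mult}_{\mfk{g}(C)}\alpha$ for $\alpha$ in the image of the fundamental region $\mcf$.

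The key claim to establish is a quantum PBW theorem: after fixing a total order on the indexing set $\{(\alpha,s)\}$, the ordered monomials
$$F_{\alpha_1,s_1}^{n_1}F_{\alpha_2,s_2}^{n_2}\cdots F_{\alpha_r,s_r}^{n_r},\qquad (\alpha_1,s_1)<\cdots<(\alpha_r,s_r),\ n_j\in\mbn,$$
form a $\mbc$-basis of $U_v^-(\mfk{g}(C))$. Spanning is reduced to rewriting arbitrary words in the $F_i$ modulo the quantum Serre relations $(iv)'$ and commutation relations $(v)$ of Definition~\ref{quantizedgeneralizedKML}. For linear independence (with $v$ not a root of unity) I would specialise $v\to 1$ and compare with the classical PBW basis of $U(\mfk{n}^-(C))$; by Borcherds' root-space decomposition $\mfk{n}^-(C)=\bigoplus_{\alpha\in\Delta^+}\mfk{g}(C)_{-\alpha}$ with $\dim\mfk{g}(C)_{-\alpha}=\text{mult}_{\mfk{g}(C)}\alpha$, the weight-space dimensions then match those of $U(\mfk{n}^-(C))$. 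Summing the generating function of these ordered monomials weighted by $e(-\mu)$ factors as
$$\prod_{\alpha\in\Delta^+}\prod_{s=1}^{\text{mult}_{\mfk{g}(C)}\alpha}\sum_{n\ges 0}e(-n\alpha)=\prod_{\alpha\in\Delta^+}(1-e(-\alpha))^{-\text{mult}_{\mfk{g}(C)}\alpha},$$
which is the desired formula.

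The main obstacle is the quantum PBW theorem itself in the Borcherds setting: for $i\in\mcire$ the braid group action and the integrability of reflections give the standard tools, but for imaginary simple roots $i\in\mciim$ no reflection $\tilde{r}_i$ is available and Lusztig's machinery does not directly apply. An alternative strategy, closer to Borcherds' original argument and the one adopted in~\cite{B} and~\cite{SV}, bypasses PBW: construct a Verma module $M(\lambda)$ over $U_v(\mfk{g}(C))$ for generic $\lambda$; the triangular decomposition gives $\text{ch}\,M(\lambda)=e(\lambda)\cdot\text{ch}\,U_v^-(\mfk{g}(C))$; a Shapovalov-form analysis then yields a quantum Weyl-Kac-Borcherds character formula for $M(\lambda)$, and comparing the two expressions for $\text{ch}\,M(\lambda)$ produces the proposition. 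I would follow this Verma module route as the principal strategy and use the PBW argument above only as a combinatorial confirmation.
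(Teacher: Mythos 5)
First, note that the paper does not actually prove this proposition: it is quoted directly from Borcherds~\cite{B} and Sevenhant--Van den Bergh~\cite{SV} (``By \cite{B} (see also \cite{SV}), we have the following proposition''). So your attempt is being compared with the cited literature rather than with an argument in the text. Your secondary (PBW) route is the right heuristic, and you correctly flag its weak point yourself: there is no braid-group machinery attached to imaginary simple roots, $\mci$ and the multiplicities may be infinite, and the linear-independence-by-specialisation step needs an integral form over $\mbc[v,v^{-1}]$ that you would have to construct. That route is not wrong, but as written it is a programme, not a proof.

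The genuine gap is in what you call your principal strategy. The identity $\text{ch}\,M(\lambda)=e(\lambda)\cdot\text{ch}\,U_v^-(\mfk{g}(C))$ is an immediate consequence of the triangular decomposition and carries no information; to extract the product formula from it you need an \emph{independent} computation of $\text{ch}\,M(\lambda)$. The Weyl--Kac--Borcherds character formula does not supply one: it computes $\text{ch}\,L(\lambda)$ for dominant integral $\lambda$ (integrable highest weight modules), not $\text{ch}\,M(\lambda)$ for generic $\lambda$, and --- more seriously --- its standard proof already uses $\text{ch}\,M(\mu)=e(\mu+\rho)e(-\rho)\prod_{\alpha\in\Delta^+}(1-e(-\alpha))^{-\text{mult}\,\alpha}$ as an input, which is exactly the statement to be proved. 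A Shapovalov-determinant argument for generic $\lambda$ only gives irreducibility of $M(\lambda)$, after which your ``two expressions'' collapse to one and the comparison is circular. The argument actually used in \cite{B}, \cite{Ka} and \cite{SV} is different in a small but essential way: the product formula for $\text{ch}\,U(\mfk{n}^-(C))$ is unconditional (classical PBW plus Borcherds' root space decomposition), and the quantum statement reduces to the equality $\dim_{\mbc}U_v^-(\mfk{g}(C))_{-\mu}=\dim_{\mbc}U(\mfk{n}^-(C))_{-\mu}$ for all $\mu\in\mbn\mci$. This is Kang's theorem \cite{Ka} (via an $A$-form and specialisation at $v=1$), or, in the setup of \cite{SV}, follows because $U_v^-$ is the free algebra on the $F_i$ modulo the radical of the pairing $\phi$ of Proposition~\ref{pairingquantizedKML}, and the graded corank of that pairing is independent of $v$ away from roots of unity. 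If you replace the Verma-module step by this dimension-comparison step, your outline becomes correct.
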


It is well-known that $U_v(\mfk{g}(C))$ is a Hopf algebra with
comultiplicaton $\Delta$, counit $\epsilon$ and the antipode $S$
given by
$$\Delta(E_i)=E_i\otimes 1+K_i\otimes E_i,\ \ \ \ \Delta(F_i)=F_i\otimes
K_{-i}+1\otimes F_i,\ \ \ \ \Delta(K_i)=K_i\otimes K_i,$$
$$\epsilon(E_i)=\epsilon(F_i)=0,\ \ \ \  \epsilon(K_i)=1,$$
$$S(E_i)=-K_{-i}E_i,\ \ \ \  S(F_i)=-F_iK_i,\ \ \ \ S(K_i)=K_{-i}.$$
Write $U_v^{\geq}(\mfk{g}(C))$ (respectively
$U_v^{\leq}(\mfk{g}(C))$) for the subalgebras of $U_v(\mfk{g})$
generated by $E_i, K_{\mu}$ (respectively by $F_i, K_{\mu}$). It
is clear they are Hopf subalgebras of $U_v(\mfk{g}(C))$.

Define a bilinear form $\phi:U_v^{\geq}(\mfk{g}(C))\times
U_v^{\leq}(\mfk{g}(C)) \lra \mbc$ by
$$\phi(E_i,F_j)=\delta_{ij}\frac{-1}{v_i-v_i^{-1}},\ \
\phi(K_i,K_j)=v^{-(i,j)},$$
$$\phi(K_i,F_j)=0=\phi(E_i,K_j),$$ for any $i,j\in\mci$, and
extend it according to the following relations: for any $a$, $a'$
in $U_v^{\geq}(\mfk{g}(C))$ and $b$, $b'$ in
$U_v^{\leq}(\mfk{g}(C))$,

$(i)$ $\phi(a,1)=\epsilon(a)$, $\phi(1,b)=\epsilon(b)$;

$(ii)$ $\phi(a,bb')=\phi(\Delta(a),b\otimes b')$;

$(iii)$ $\phi(aa',b)=\phi(a\otimes a',\Delta^{op}(b))$;

$(iv)$ $\phi(S(a),b)=\phi(a,S^{-1}(b))$,\\
where $\phi(a\otimes a',b\otimes b')=\phi(a,b)\phi(a',b')$, and
$\Delta^{op}(b)=\sum b_{2}\otimes b_{1}$, if $\Delta(b)=\sum
b_{1}\otimes b_{2}$.

Such a bilinear form $\phi$ satisfying $(i)-(iv)$ is called a {\em
skew-Hopf pairing}. Note that sometimes the triple
$(U_v^{\geq}(\mfk{g}),U_v^{\leq}(\mfk{g}),\phi)$ is called a
skew-Hopf pairing. 

\begin{Prop} (Proposition 2.4 \cite{SV}) The skew-Hopf pairing $(U_v^{\geq}(\mfk{g}),U_v^{\leq}(\mfk{g}),\phi)$
defined above is restricted non-degenerate, that means its
restricted form $\phi:U_v^{+}(\mfk{g}) \times
U_v^{-}(\mfk{g})\longrightarrow \mathbb{C}$ is non-degenerate.
\label{pairingquantizedKML}
\end{Prop}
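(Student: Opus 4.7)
The plan is to prove restricted non-degeneracy by exploiting the natural $\mbz\mci$-weight grading and reducing the statement to a weight-by-weight assertion. First I would verify that $\phi$ respects the weight decompositions $U_v^+=\bigoplus_{\mu\in\mbn\mci}U_v^+[\mu]$ and $U_v^-=\bigoplus_{\mu\in\mbn\mci}U_v^-[-\mu]$, where $U_v^+[\mu]$ is spanned by monomials $E_{i_1}\cdots E_{i_r}$ with $i_1+\cdots+i_r=\mu$. Using the multiplicativity axioms $(ii)$ and $(iii)$ of $\phi$ applied to elements of the form $K_\lambda x$ and $K_\lambda y$, together with $K_\lambda E_i=v^{(\lambda,i)}E_iK_\lambda$ and $K_\lambda F_i=v^{-(\lambda,i)}F_iK_\lambda$, one shows $\phi(U_v^+[\mu],U_v^-[-\nu])=0$ whenever $\mu\ne\nu$. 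Hence $\phi$ decomposes as a direct sum of bilinear forms $\phi_\mu:U_v^+[\mu]\times U_v^-[-\mu]\to\mbc$, and it suffices to establish that each $\phi_\mu$ is non-degenerate.

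I would then argue by induction on the height $|\mu|=\sum n_i$ for $\mu=\sum n_ii\in\mbn\mci$. The case $|\mu|=0$ is trivial, and $|\mu|=1$ follows immediately from $\phi(E_i,F_i)=-1/(v_i-v_i^{-1})\ne 0$. For the inductive step, the essential tool is a family of skew-derivations $\partial_i:U_v^+[\mu]\to U_v^+[\mu-i]$ extracted from the coproduct. Writing out $\Delta(x)$ and isolating those summands whose first tensor factor has $E$-weight exactly $i$ (the only ones that can pair nontrivially with $F_i$ under $\phi$), a direct computation using axiom $(ii)$ and the pairing values $\phi(E_i,F_j)=-\delta_{ij}/(v_i-v_i^{-1})$, $\phi(K_\lambda,F_j)=0$, $\phi(1,F_j)=0$ yields an identity of the form $\phi_\mu(x,F_iy')=c_i\,\phi_{\mu-i}(\partial_i(x),y')$ for some nonzero scalar $c_i$ and every $y'\in U_v^-[-(\mu-i)]$. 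Since $U_v^-[-\mu]$ is spanned by products $F_iy'$, the vanishing $\phi_\mu(x,-)=0$, combined with the inductive hypothesis applied to $\phi_{\mu-i}$, forces $\partial_i(x)=0$ for every $i\in\mci$. A symmetric argument using $(iii)$ handles the right radical.

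The main obstacle, and the technical heart of the argument, is then to show that the family $\{\partial_i\}_{i\in\mci}$ jointly separates positive-weight elements: if $x\in U_v^+[\mu]$ with $|\mu|>0$ satisfies $\partial_i(x)=0$ for all $i\in\mci$, then $x=0$. This relies on the fact that the quantum Serre relations $(iv)$ together with the commutation relations $(v)$ for $c_{ij}=0$ are exactly the defining relations of $U_v^+$ as a quotient of the free algebra on the $E_i$, so that the $\partial_i$ descend well and their joint kernel in positive degree is trivial. This is precisely the Sevenhant--Van den Bergh computation~\cite{SV} adapting Lusztig's classical skew-derivation argument to the Borcherds-Cartan setting permitting imaginary simple roots; my proof will reduce to invoking (or reproducing) their analysis once conventions on signs and normalizations are aligned.
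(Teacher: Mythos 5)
The paper itself offers no proof of this proposition: it is quoted as Proposition 2.4 of Sevenhant--Van den Bergh \cite{SV}, so there is nothing internal to compare your argument against. Your outline is the standard Lusztig-style argument and is structurally sound: the orthogonality of distinct weight spaces under $\phi$, the adjunction $\phi(x,F_iy')=c_i\,\phi(\partial_i(x),y')$ between left multiplication by $F_i$ and the skew-derivation $\partial_i$ extracted from the coproduct, and the reduction of non-degeneracy to the joint injectivity of the family $\{\partial_i\}$ in positive degree are all correct and routine to verify from the axioms $(i)$--$(iv)$ of the pairing. Be aware, however, that your ``main obstacle'' step is not genuinely a reduction: on the algebra $U_v^{+}(\mfk{g})$ presented by the relations $(iv)$, $(iv)'$ and $(v)$, the statement that $\bigcap_{i\in\mci}\ker\partial_i$ is trivial in positive degree is logically equivalent to the non-degeneracy you set out to prove --- both say exactly that the radical of the form on the free algebra on the $E_i$ is generated by the quantum Serre and commutation relations, which is the hard content in the Borcherds--Cartan setting with imaginary simple roots. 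All of the substance therefore lives in the Sevenhant--Van den Bergh computation you invoke in your final paragraph, which is precisely the citation the paper already makes. Your proposal is thus acceptable as a proof by reference with the standard scaffolding made explicit, but it should not be read as an independent argument.
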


\bigskip
\section{The double Ringel-Hall algebra}

Let $k=\mb{F}_q$ be a fixed finite field, and $\mathscr{A}$ be an
abelian category. Assume that $\ma$ is $k$-linear, Hom-finite and
Ext-finite. That is, for all objects $X$, $Y$ and $Z$ in $\ma$,
the sets $\text{Hom}(X,Y)$ and $\text{Ext}^1(X,Y)$ are finite
dimensional $k$-vector spaces and the composition
$\text{Hom}(X,Y)\times\text{Hom}(Y,Z)\lra\text{Hom}(X,Z)$ is
$k$-bilinear. Assume further that $\ma$ is hereditary, i.e.
$\text{Ext}^i(-,-)$ vanishes for all $i\ges 2$, and that $\ma$ is
a length category, i.e. all objects in $\ma$ have a composition
series of finite length.

Following Ringel~\cite{R}, Green~\cite{G} and Xiao~\cite{X}, we
associate to such a category $\ma$ a Hopf algebra, called the {\em
Ringel-Hall algebra}, and a doubled version of the Ringel-Hall
algebra, by using a skew-Hopf pairing.

\subsection{Ringel-Hall algebras and skew-Hopf pairings}

Let $\ma$ be an abelian category as above. Let $\mcp$ be the set of
isomorphism classes of objects in $\mathscr{A}$, $\mcp_{1}$ the
complement set of $\{0\}$ in $\mcp$, and $\mci$ the set of
isomorphism classes of simple objects in $\mathscr{A}$. For
$\alpha\in\mcp$, write $M_{\alpha}$ for a representative object of
$\alpha$ in $\ma$. In particular if $i\in\mci$, we write $S_i$ for a
simple object in $\ma$ corresponding to $i$. The Grothendieck group
of the category $\mathscr{A}$ is the free abelian group $\mbz \mci$
with basis $\mci$, as $\ma$ is a length category. For any object $M$
in $\ma,$  write $\underline{\text{dim}}M$ for the image of $M$ in
$\mathbb{Z}\mci$, called the {\em dimension vector} of $M,$ which is
given by the composition factors of $M,$ or equivalently, uniquely
determined by the rule:
$\underline{\text{dim}}L=$\underline{\text{dim}}M+$\underline{\text{dim}}N$
for any exact sequence in $\ma:$ $0\rightarrow M\rightarrow
L\rightarrow N\rightarrow 0.$

For $\alpha$, $\beta$ and $\gamma$ in $\mcp$, the {\em Ringel-Hall
number} $g^{\gamma}_{\alpha\beta}$ counts the number of subobjects
$X$ of $M_{\gamma}$ satisfying $X\cong M_{\beta}$ and
$M_{\gamma}/X\cong M_{\alpha}$. The {\em Euler form} $\langle
\alpha,\beta\rangle$
=$\text{dim}_{k}\text{Hom}_{\mathscr{A}}(M_{\alpha},M_{\beta})-
\text{dim}_{k}\text{Ext}_{\mathscr{A}}^{1}(M_{\alpha},M_{\beta})$,
and the {\em symmetric Euler form}
($\alpha,\beta$)=$\langle\alpha,\beta\rangle+\langle\beta,\alpha\rangle$.
Denote by $a_{\alpha}$ the cardinality of the automorphism group of
$M_{\alpha}$.

\begin{Lem}The symmetric Euler form $(-,-): \mathbb{Z}\mci \times \mbz\mci \lra \mbz$
is a generalized Kac-Moody bilinear form. \label{genKM}\end{Lem}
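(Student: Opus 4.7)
The plan is to verify the three defining conditions of a generalized Kac-Moody bilinear form listed in Remark \ref{matrixandbilinearform}: symmetry, integrality with $(i,j) \leq 0$ for $i \neq j$, and the divisibility condition $\tfrac{2(i,j)}{(i,i)} \in \mathbb{Z}$ whenever $(i,i) > 0$. Symmetry is immediate from the definition $(\alpha,\beta) = \langle \alpha,\beta\rangle + \langle \beta,\alpha\rangle$. Integrality on $\mci \times \mci$ follows because each $\langle i,j\rangle$ is a difference of $k$-dimensions of finite-dimensional spaces.

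For $(i,j) \leq 0$ with $i \neq j$, I would invoke Schur's lemma: since $S_i$ and $S_j$ are non-isomorphic simple objects, $\mathrm{Hom}_\ma(S_i, S_j) = 0 = \mathrm{Hom}_\ma(S_j, S_i)$, so $(i,j) = -\dim_k\mathrm{Ext}^1(S_i,S_j) - \dim_k\mathrm{Ext}^1(S_j,S_i) \leq 0$.

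The substantive step is the divisibility condition. First, I would set $d_i := \dim_k\mathrm{End}_\ma(S_i)$ and note that, because $\mathrm{End}_\ma(S_i)$ is a finite-dimensional division $k$-algebra (Schur) and $k$ is finite, Wedderburn's theorem gives $\mathrm{End}_\ma(S_i) \cong \mathbb{F}_{q^{d_i}}$. Since $\mathrm{Ext}^1(S_i,S_i)$ is naturally an $\mathrm{End}_\ma(S_i)$-bimodule, its $k$-dimension is divisible by $d_i$. In the case $(i,i) > 0$, the identity $(i,i) = 2d_i - 2\dim_k\mathrm{Ext}^1(S_i,S_i)$ then forces $\mathrm{Ext}^1(S_i,S_i) = 0$ (otherwise $(i,i) \leq 2d_i - 2d_i = 0$), so $(i,i) = 2d_i$. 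For arbitrary $j$, I observe that $\mathrm{Hom}_\ma(S_i,S_j)$, $\mathrm{Hom}_\ma(S_j,S_i)$, $\mathrm{Ext}^1(S_i,S_j)$ and $\mathrm{Ext}^1(S_j,S_i)$ are all left or right $\mathrm{End}_\ma(S_i)$-modules, hence their $k$-dimensions are all divisible by $d_i$. Therefore $d_i \mid (i,j)$, and $\tfrac{2(i,j)}{(i,i)} = \tfrac{(i,j)}{d_i} \in \mathbb{Z}$.

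The main subtlety to get right is the bimodule structure justification in the last step, in particular being explicit that $\mathrm{End}_\ma(S_i)$ acts on both factors of $\mathrm{Hom}$ and $\mathrm{Ext}^1$ (by pre- or post-composition, respectively), so that dimensions over $k$ are multiples of $d_i$; everything else is essentially bookkeeping with the definitions.
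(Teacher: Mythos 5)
Your proof is correct and follows essentially the same route as the paper's: the vanishing of Hom between non-isomorphic simples gives $(i,j)\le 0$, and the key divisibility step uses that $\mathrm{End}_\ma(S_i)$ is a finite division ring (hence a field by Wedderburn) acting on the relevant Hom and Ext spaces, which forces $\mathrm{Ext}^1(S_i,S_i)=0$ when $(i,i)>0$ and makes $(i,i)=2\dim_k\mathrm{End}_\ma(S_i)$ divide $2(i,j)$. The only (harmless) difference is that you also treat the case $i=j$ of the divisibility condition, which the paper skips as trivial.
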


\begin{proof} By definition, one needs to check: $(i)$ For $i,j\in \mci$ and $i\neq j$, $(i,j)\leq
0$, and $(ii)$ If $(i,i)>0$, then $\frac{2(i,j)}{(i,i)}\in\mbz$.

$(i)$ is obvious. For $(ii)$, note that for $i,j\in\mci$,
$\text{End}_{\ma}(S_i)$ is a finite skew field (hence a field), and
$\text{Ext}^1 _{\ma}(S_i,S_j)$ has the natural structure as
$\text{End}_{\ma}(S_j)$-$\text{End}_{\ma}(S_i)$-bimodule. Suppose
$(i,i)>0$. Namely
$$2(\text{dim}_k\text{End}_{\ma}(S_i)-\text{dim}_{k}\text{Ext}_{\ma}^{1}(S_i,S_i))=$$$$\text{dim}_k\text{End}_{\ma}(S_i)
(1-\text{dim}_{\text{End}_{\ma}(S_i)}\text{Ext}_{\mathscr{A}}^{1}(S_i,S_i))>0$$
This implies $S_i$ has no self-extensions, and
$(i,i)=2\text{dim}_{k}\text{End}_{\ma}(S_i)$. Hence, when $i\neq j$,
\begin{eqnarray*} \frac{2(i,j)}{(i,i)}&=&
-\frac{\text{dim}_{k}\text{Ext}_{\ma}^{1}(S_i,S_j)+\text{dim}_{k}\text{Ext}_{\ma}^{1}(S_j,S_i)}{\text{dim}_k\text{End}_{\ma}(S_i)}\\
&=&-\text{dim}_{\text{End}_{\ma}(S_i)}\text{Ext}_{\mathscr{A}}^{1}(S_i,S_j)-\text{dim}_{\text{End}_{\ma}(S_i)}\text{Ext}_{\mathscr{A}}^{1}(S_j,S_i)\end{eqnarray*}
is an integer.

\end{proof}

By Remark \ref{matrixandbilinearform}, the symmetric Euler form
determines a symmetrizable Borcherds-Cartan matrix, denoted by
$C_0=(c_{ij})_{i,j\in\mci}$, where $c_{ij}=\frac{2(i,j)}{(i,i)}$
if $(i,i)>0$ and $(i,j)$ otherwise, with symmetrization
$\varepsilon_i=\frac{(i,i)}{2}=\langle i,i\rangle$ if $(i,i)>0$
and $1$ otherwise.

Write $v$ for the complex number $\sqrt{q}$, and $v_i$ for
$v^{\varepsilon_i}$ for $i\in\mci$.

\begin{Def} The `positive' Ringel-Hall algebra, denoted by $\mchp $,
is defined to be the Hopf algebra over $\mbc$ with basis
$\{K_{\mu}u_{\alpha}^+:\mu\in\mbz \mci,\alpha\in\mcp\}$ whose Hopf
structure is given by the following: \label{defpositive}

$(i)$ (multiplication and unit)
$$u_{\alpha}^+u_{\beta}^+=v^{\langle\alpha,\beta\rangle}\sum_{\gamma\in\mcp}
g^{\gamma}_{\alpha\beta} u_{\gamma}^+,\
K_{\mu}K_{\nu}=K_{\mu+\nu},$$
$$K_{\mu}u_{\alpha}^+=v^{(\mu,\alpha)}u_{\alpha}^+K_{\mu},\
1=u_0^+=K_0;$$

$(ii)$ (comultiplication and counit)
$$\Delta(u_{\gamma}^+)=\sum_{\alpha,\beta\in\mcp}v^{\langle\alpha,\beta\rangle}
\frac{a_{\alpha}a_{\beta}}{a_{\gamma}} g^{\gamma}_{\alpha\beta}
u_{\alpha}^+ K_{\beta}\otimes u_{\beta}^+,$$
$$\Delta(K_{\mu})=K_{\mu}\otimes K_{\mu},\
\epsilon(u_{\alpha}^+)=\delta_{\alpha,0},\ \epsilon(K_{\mu})=1;$$

$(iii)$ (antipode)$$S(K_{\mu})=K_{-\mu},$$
$$S(u_{\gamma}^+)=\delta_{\gamma0}+\sum_{m\ge 1} (-1)^m
\sum_{\pi\in\mcp,\gamma_1,\cdots,\gamma_m\in\mcp_1}
v^{2\sum_{i<j}\langle \gamma_i,\gamma_j\rangle}
\frac{a_{\gamma_1}\cdots a_{\gamma_m}}{a_{\gamma}}
g^{\gamma}_{\gamma_1\cdots\gamma_m}
g^{\pi}_{\gamma_1\cdots\gamma_m} K_{-\gamma}u_{\pi}^+.$$
\end{Def}

\begin{Def} The `negative' Ringel-Hall algebra, denoted by $\mchm$,
is defined to be the Hopf algebra over $\mbc$ with basis
$\{K_{\mu}u_{\alpha}^-:\mu\in\mbz \mci,\alpha\in\mcp\}$ whose Hopf
structure is given by the following: \label{defnegative}

$(i)$ (multiplication and unit)
$$u_{\alpha}^-u_{\beta}^-=v^{\langle\alpha,\beta\rangle}\sum_{\gamma\in\mcp}
g^{\gamma}_{\alpha\beta} u_{\gamma}^-,\
K_{\mu}K_{\nu}=K_{\mu+\nu},$$
$$K_{\mu}u_{\alpha}^-=v^{-(\mu,\alpha)}u_{\alpha}^-K_{\mu},\
1=u_0^+=K_0;$$

$(ii)$ (comultiplication and counit)
$$\Delta(u_{\gamma}^-)=\sum_{\alpha,\beta\in\mcp}v^{\langle\beta,\alpha\rangle}
\frac{a_{\alpha}a_{\beta}}{a_{\gamma}} g^{\gamma}_{\beta\alpha}
u_{\alpha}^- \otimes u_{\beta}^-K_{-\alpha},$$
$$\Delta(K_{\mu})=K_{\mu}\otimes K_{\mu},\
\epsilon(u_{\alpha}^-)=\delta_{\alpha,0},\ \epsilon(K_{\mu})=1;$$

$(iii)$ (antipode) $$S(K_{\mu})=K_{-\mu},$$
$$S(u_{\gamma}^-)=\delta_{\gamma0}+\sum_{m\ge 1} (-1)^m
\sum_{\pi\in\mcp,\gamma_1,\cdots,\gamma_m\in\mcp_1}
\frac{a_{\gamma_1}\cdots a_{\gamma_m}}{a_{\gamma}}
g^{\gamma}_{\gamma_1\cdots\gamma_m}
g^{\pi}_{\gamma_m\cdots\gamma_1} u_{\pi}^-K_{\gamma}.$$
\end{Def}

\begin{Rem} See Schiffmann's lecture note~\cite{S}, in a more general
setting, for the proof of the Hopf structure defined as above. The
multiplication of Ringel-Hall algebras was defined by
Ringel~\cite{R}, the bialgebra structure was defined by
Green~\cite{G}, and the antipode was found by Xiao~\cite{X}.
\end{Rem}

Following Ringel~\cite{R1}, we define a bilinear form $\varphi
:\mchp\times\mchm\lra\mbc$ by
$$\varphi(K_{\mu}u_{\alpha}^+,K_{\nu}u_{\beta}^-)=v^{-(\mu,\nu)-(\alpha,\nu)+(\mu,\beta)}\frac{1}{a_{\alpha}}\delta_{\alpha\beta}$$
for any $\mu$, $\nu$ in $\mathbb{Z}\mci$ and $\alpha$, $\beta$ in
$\mcp$. In particular for $i\in\mci$,
$$\varphi(u_i^+,u_i^-)=\frac{1}{a_i}=\begin{cases}
                     \frac{1}{q^{\varepsilon_i}-1}=\frac{1}{v_i^2-1}  &\text{if } (i,i)>0  \\
                    \frac{1}{q^{{\rm dim}_k{\rm End}(S_i)}-1} & \text{otherwise}. \end{cases}$$
Similar to Xiao \cite{X} Proposition 5.3, we have the following
lemma.

\begin{Lem} With the bilinear form $\varphi$ defined as above,
$(\mchp,\mchm,\varphi)$ is a skew-Hopf pairing.
\label{bilineaform}





\end{Lem}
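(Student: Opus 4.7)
The plan is to verify the four defining axioms $(i)$--$(iv)$ of a skew-Hopf pairing for the bilinear form $\varphi$, following the template of Xiao~\cite{X}. Since $\varphi$ is prescribed on the basis $\{K_\mu u^+_\alpha\}\times\{K_\nu u^-_\beta\}$, and since both $\mchp$ and $\mchm$ carry the bialgebra structure already set up in Definitions~\ref{defpositive} and~\ref{defnegative}, the verification reduces to a direct computation on basis elements once $(ii)$ and $(iii)$ are established; axiom $(iv)$ will then follow formally from uniqueness of antipodes.

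Axiom $(i)$ is immediate from the formula for $\varphi$ together with $a_0=1$, $\epsilon(K_\mu u^+_\alpha)=\delta_{\alpha,0}$ and $\epsilon(K_\nu u^-_\beta)=\delta_{\beta,0}$. For axiom $(ii)$, I would fix basis elements $a=K_\mu u^+_\gamma$, $b=K_\nu u^-_\alpha$ and $b'=K_\sigma u^-_\beta$ and expand both sides. The left side uses the multiplication in $\mchm$ to produce a sum $\sum_\lambda v^{\langle\alpha,\beta\rangle}g^\lambda_{\alpha\beta}\,\varphi(a,K_{\nu+\sigma}u^-_\lambda)$, pulling the Cartan factors to the right via $K_\nu u^-_\beta=v^{-(\nu,\beta)}u^-_\beta K_\nu$. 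The right side uses the comultiplication of $u^+_\gamma$ from Definition~\ref{defpositive} to produce a double sum indexed by pairs $(\alpha',\beta')$ with weight $v^{\langle\alpha',\beta'\rangle}\frac{a_{\alpha'}a_{\beta'}}{a_\gamma}g^\gamma_{\alpha'\beta'}$, evaluated against $K_\nu u^-_\alpha\otimes K_\sigma u^-_\beta$. Matching the two sides as a polynomial identity in Hall numbers, automorphism counts and powers of $v$ is exactly \emph{Green's formula} of~\cite{G}, which supplies the required numerical identity linking $g^\lambda_{\alpha\beta}$, $g^\gamma_{\alpha'\beta'}$, the orders $a_\bullet$ and the Euler form. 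Axiom $(iii)$ is the mirror computation on the $\mchp$ side, again settled by Green's formula with $\Delta$ replaced by $\Delta^{op}$, which accounts for the asymmetric appearance of $v^{\langle\beta,\alpha\rangle}$ versus $v^{\langle\alpha,\beta\rangle}$ in the comultiplications of $\mchm$ and $\mchp$.

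Axiom $(iv)$ then follows by a general Hopf-algebraic argument: once $\varphi$ satisfies $(i)$--$(iii)$, both $a\mapsto\varphi(S(a),-)$ and $a\mapsto\varphi(a,S^{-1}(-))$ are convolution inverses of $\varphi$ in $\mathrm{Hom}_{\mbc}(\mchp,(\mchm)^*)$, so they coincide. The main obstacle will be the bookkeeping of $v$-powers: the definition of $\varphi$ mixes three distinct contributions $-(\mu,\nu)$, $-(\alpha,\nu)$ and $(\mu,\beta)$, and one must verify that when the Cartan factors $K_\mu$ are moved past $u^\pm_\bullet$ via $K_\mu u^\pm_\alpha=v^{\pm(\mu,\alpha)}u^\pm_\alpha K_\mu$ and combined with the $v^{\langle\cdot,\cdot\rangle}$ factors from multiplication and comultiplication, the resulting exponents consolidate precisely into the form dictated by Green's formula. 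The generality of $\ma$ introduces no new difficulty here, since Green's formula uses only the hereditariness and the finitary length property; the verification is parallel to the quiver case treated by Xiao, with the Euler form on $\mbz\mci$ playing the role it does in $\mathrm{mod}(\Lambda)$.
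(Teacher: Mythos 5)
Your proposal is correct and takes essentially the same route as the paper, which omits the verification as ``straightforward'' and refers to Xiao's Proposition 5.3: a direct check of axioms $(i)$--$(iii)$ on basis elements $K_\mu u^+_\gamma$, $K_\nu u^-_\alpha$, followed by the formal convolution-inverse argument for $(iv)$. One small correction: the numerical identity needed to match the two sides of $(ii)$ and $(iii)$ is \emph{not} Green's formula (that is what makes $\Delta$ an algebra map inside $\mchp$, and it is already built into Definitions~\ref{defpositive} and~\ref{defnegative}); since $\varphi$ is diagonal with the $1/a_\alpha$ normalization, the Hall numbers $g^\gamma_{\alpha\beta}$ and the automorphism orders cancel outright, and the only genuine work is the bookkeeping of the $v$-exponents, exactly as you anticipate in your last paragraph.
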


One needs to check $\varphi$ satisfies similar relations as
$(i)-(iv)$ stated before Proposition \ref{pairingquantizedKML}.
The proof is straightforward and hence omitted.

\subsection{The double Ringel-Hall algebra}

The {\em double Ringel-Hall algebra} of the category $\ma$ is
defined to be the reduced Drinfeld double of the skew-Hopf pairing
$(\mchp,\mchm,\varphi)$, denoted by $\mcd$. It is the quotient of
the Hopf algebra $\mchp\otimes\mchm$ factoring out the Hopf ideal
generated by $\{K_{\mu}\otimes K_{-\mu}-1\otimes
1:\mu\in\mathbb{Z}\mci\}$, with the Hopf structure inherited from
$\mchp\otimes\mchm$. It has a triangle decomposition of the form
$$\mcd=\mfkhm\otimes\mc{T}\otimes\mfkhp,$$ where $\mc{T}$ is the subalgebra of $\mcd$ generated by
$\{K_{\mu}:\mu\in\mbz \mci\}$, and $\mfkhp$ ($\mfkhm$) is the
subalgebra of $\mchp$ ($\mchm$) generated by
$\{u_{\alpha}^+:\alpha\in\mcp\}$ ($\{u_{\beta}^-:\beta\in\mcp\}$,
respectively).
\begin{Lem} \label{pmvarphi}In $\mcd$ we have for $i,j\in \mci$ that
$$u_i^+u_j^--u_j^-u_i^+=-\varphi(u_i^+,u_j^-)(K_i-K_{-i}).$$
\end{Lem}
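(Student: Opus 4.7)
The plan is to derive the relation from the defining cross-commutation of the reduced Drinfeld double, using only the skew-Hopf pairing axioms $(i)$--$(iv)$ and the explicit values of $\varphi$ on generators.

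First I would compute the coproducts $\Delta(u_i^+)$ and $\Delta(u_j^-)$. Because $S_i$ is simple, the only submodules of $S_i$ are $0$ and $S_i$ itself, so $g^{i}_{\alpha\beta}$ vanishes unless $(\alpha,\beta)\in\{(i,0),(0,i)\}$, and in those cases it equals $1$. The factors $v^{\langle\alpha,\beta\rangle}$ and $a_\alpha a_\beta/a_i$ all collapse to $1$, yielding the clean expressions
$$\Delta(u_i^+)=u_i^+\otimes 1+K_i\otimes u_i^+,\qquad \Delta(u_j^-)=u_j^-\otimes K_{-j}+1\otimes u_j^-.$$

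Second, I would apply the standard cross-commutation identity of the Drinfeld double built from the skew-Hopf pairing $\varphi$, which expresses $a\cdot b$ versus $b\cdot a$ (for $a\in\mchp$, $b\in\mchm$) as a sum of $\varphi$-weighted products over the coproducts of $a$ and $b$. Substituting the two two-term coproducts above, only four terms appear on each side. Using $\varphi(K_\mu,u_\alpha^-)=0=\varphi(u_\alpha^+,K_\nu)$ from the definition of $\varphi$, together with the counit identities $\varphi(1,-)=\epsilon(-)$ and $\varphi(-,1)=\epsilon(-)$, most terms vanish. What remains pairs $\varphi(u_i^+,u_j^-)$ with the grouplike factors $K_i$ and $K_{-j}$ and leaves the products $u_i^+u_j^-$ and $u_j^-u_i^+$ as the surviving "mixed" terms.

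Third, I would invoke the reduction relation $K_\mu K_{-\mu}=1$ that defines $\mcd$ as the quotient of $\mchp\otimes\mchm$, and use the Kronecker delta in $\varphi(u_i^+,u_j^-)=\delta_{ij}/a_i$: when $i\ne j$ both sides vanish trivially, and when $i=j$ we may replace $K_{-j}$ by $K_{-i}$. Rearranging gives the required identity, up to verifying the overall sign.

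The main obstacle is precisely that sign. The Drinfeld double cross-relation can be written in several equivalent forms (depending on whether one uses $\Delta$ or $\Delta^{\mathrm{op}}$ on $\mchm$, and on which of axioms $(ii), (iii)$ one invokes first), and the sign in front of $\varphi(u_i^+,u_j^-)(K_i-K_{-i})$ depends on this choice; one must pick the form consistent with Definitions \ref{defpositive}--\ref{defnegative} and the pairing $\varphi$ given in Lemma \ref{bilineaform}. Once the correct version is fixed, the rest is a short bookkeeping exercise, and the resulting formula is the Ringel-Hall analogue of relation $(iii)$ of Definition \ref{quantizedgeneralizedKML}, as one sees by comparing with $E_iF_j-F_jE_i=-\phi(E_i,F_j)(K_i-K_{-i})$ in the quantized enveloping algebra.
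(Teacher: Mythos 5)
Your strategy is the same as the paper's: compute the coproducts of the generators, feed them into the Drinfeld-double cross-commutation relation, and kill most terms using $\varphi(K_\mu,u_\alpha^-)=0=\varphi(u_\alpha^+,K_\nu)$ and the counit axioms. The one point you leave open --- the overall sign --- is exactly where the paper's proof does its real work, and as you note it cannot be waved away, since the two ``equivalent forms'' of the cross-relation you mention would produce opposite signs here. The paper settles it by using the antipode form of the relation, which requires the twofold coproducts $\Delta^2(u_i^+)$ and $\Delta^2(u_j^-)$ (three tensor factors, not two): writing
$$u_j^-u_i^+=(1\otimes u_j^-)(u_i^+\otimes 1)=\sum\varphi\bigl(u^+_{i(1)},S(u^-_{j(1)})\bigr)\,u^+_{i(2)}\otimes u^-_{j(2)}\,\varphi\bigl(u^+_{i(3)},u^-_{j(3)}\bigr),$$
the minus sign emerges from the single computation $\varphi(u_i^+,S(u_j^-))=\varphi(u_i^+,-u_j^-K_j)=-\varphi(\Delta(u_i^+),u_j^-\otimes K_j)=-\varphi(u_i^+,u_j^-)$, while the term $K_i\otimes 1$ picks up $+\varphi(u_i^+,u_j^-)$ from the third tensor slot. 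So to close your argument you should commit to this antipode form (which is the one defining the reduced Drinfeld double of the skew-Hopf pairing $(\mchp,\mchm,\varphi)$ in the sense of Xiao), rather than determining the sign a posteriori by comparison with $U_v(\mfk{g}(C_0))$ --- that comparison is what Theorem \ref{compositiontheorem} later proves \emph{using} this lemma, so it cannot be invoked here without circularity. Everything else in your outline matches the paper's proof.
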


\begin{proof}
\begin{eqnarray*}
\Delta^2(u_i^+)&=&u_i^+\otimes 1\otimes 1+K_i\otimes u_i^+\otimes
1+K_i\otimes K_i\otimes u_i^+,\\
\Delta^2(u_j^-)&=&1\otimes 1\otimes u_j^-+1\otimes u_j^-\otimes
K_{-j}+u_j^-\otimes K_{-j}\otimes K_{-j}. \end{eqnarray*} Hence
\begin{eqnarray*}
u_j^-u_i^+&=&(1\otimes u_j^-)(u_i^+\otimes 1)\\
&=&\varphi(u_i^+,S(u_j^-))\cdot 1\otimes K_{-j}\cdot
\varphi(1,K_{-j})+\varphi(K_i,S(1))\cdot u_i^+\otimes u_j^-\cdot
\varphi(1,K_{-j})+\\
&&\ \ \varphi(K_i,S(1))\cdot K_i\otimes
1\cdot \varphi(u_i^+,u_j^-)\\
&=&-1\otimes K_{-j}\cdot \varphi(u_i^+,u_j^-)+K_i\otimes 1\cdot
\varphi(u_i^+,u_j^-)+u_i^+\otimes u_j^-,\end{eqnarray*} where
$\varphi(u_i^+,S(u_j^-))=\varphi(u_i^+,-u_j^-K_j)
=-\varphi(\Delta(u_i^+),u_j^-\otimes K_j)=-\varphi(u_i^+,u_j^-).$ On
the other hand, $u_i^+u_j^-=(u_i^+\otimes 1)(1\otimes
u_j^-)=u_i^+\otimes u_j^-$. Hence in the double Ringel-Hall algebra
$\mcd$, $u_i^+u_j^--u_j^-u_i^+=-\varphi(u_i^+,u_j^-)(K_i-K_{-i})$.
\end{proof}

By setting
$\text{deg}(u_{\alpha}^+)=\underline{\text{dim}}(M_{\alpha})$,
$\text{deg}(u_{\alpha}^-)=-\underline{\text{dim}}(M_{\alpha})$ and
$\text{deg}(K_{\mu})=0$, for $\alpha\in\mcp$ and
$\mu\in\mathbb{Z}\mci$, the Hopf algebras $\mchp$ and $\mchm$ become
$\mbn \mci$-graded and $-\mbn \mci$-graded, respectively. Hence the
double Ringel-Hall algebra $\mcd$ is $\mbz \mci$-graded. For any
$\mu\in\mbn \mci$, the homogeneous space $\mfkhpm_{\pm\mu}$ is a
finite dimensional $\mbc$-vector space with basis
$\{u_{\alpha}^{\pm}:\alpha\in\mcp_{\mu}\}$, where
$\mcp_{\mu}=\{\alpha\in\mcp:\underline{\text{dim}}(M_{\alpha})=\mu\}$.

The double Ringel-Hall algebra $\mcd$ has an important algebra
automorphism $\omega:\mcd\lra\mcd$ defined on generators by
$$\omega(u_{\alpha}^+)=u_{\alpha}^-,\
\omega(u_{\alpha}^-)=u_{\alpha}^+,\ \omega(K_{\mu})=K_{-\mu},$$
for all $\alpha\in\mcp$ and $\mu\in\mbz \mci$. It is easy to see
that the operator $\omega$ is an involution, i.e. $\omega^2=id$,
and that $\omega$ induces algebra isomorphisms
$\mfkhp\stackrel{\simeq}{\lra}\mfkhm$ and
$\mchp\stackrel{\simeq}{\lra}\mchm$.

\begin{Lem}\label{omega} $(i)$ The operator $\omega$ is a coalgebra anti-morphism of $\mcd$, i.e.
$\Delta\circ\omega=\omega\circ\Delta^{op}$.

$(ii)$ For any $x\in\mchp$ and $y\in\mchm$, we have
$\varphi(x,y)=\varphi(\omega(y),\omega(x))$.

$(iii)$ The relation between $\omega$ and the antipode $S$ is
$S\circ \omega = \omega \circ S^{-1}$. \end{Lem}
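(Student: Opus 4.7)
For part (i), I would verify $\Delta\circ\omega = (\omega\otimes\omega)\circ\Delta^{op}$ directly on the algebra generators $K_\mu$ and $u_\alpha^\pm$ of $\mcd$. Both composites are algebra morphisms $\mcd\lra\mcd\otimes\mcd$: $\Delta$ and $\Delta^{op}$ are algebra morphisms, $\omega$ is one by the paragraph preceding the lemma, and so is $\omega\otimes\omega$. Hence agreement on generators suffices. On $K_\mu$ both sides produce $K_{-\mu}\otimes K_{-\mu}$. On $u_\alpha^+$, applying $\omega\otimes\omega$ to the formula for $\Delta^{op}(u_\alpha^+)$ read off from Definition \ref{defpositive}(ii) and then swapping the dummy indices $\alpha'\leftrightarrow\beta'$ reproduces term-by-term the formula for $\Delta(u_\alpha^-)$ in Definition \ref{defnegative}(ii); the case of $u_\alpha^-$ is symmetric, and the generator $K_\mu$ follows trivially.

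For part (ii), I would first check the identity $\varphi(x,y) = \varphi(\omega(y),\omega(x))$ on generator pairs $\{K_\mu u_\alpha^+,\,K_\nu u_\beta^-\}$ by direct inspection of the formula defining $\varphi$, where the equality reduces to the symmetry of the bilinear form $(-,-)$. To extend to arbitrary $x\in\mchp$ and $y\in\mchm$, I would use induction on $\mbn\mci$-degree via the skew-Hopf axioms of Lemma \ref{bilineaform}. For instance, for a product $xx'$ in the first argument, writing $\Delta(y) = \sum y_{(1)}\otimes y_{(2)}$ and applying part (i),
\begin{align*}
\varphi(\omega(y),\omega(xx')) &= \varphi(\Delta(\omega(y)),\omega(x)\otimes\omega(x'))\\
&= \varphi((\omega\otimes\omega)\Delta^{op}(y),\omega(x)\otimes\omega(x'))\\
&= \sum\varphi(\omega(y_{(2)}),\omega(x))\,\varphi(\omega(y_{(1)}),\omega(x')),
\end{align*}
which by the inductive hypothesis equals $\sum\varphi(x,y_{(2)})\varphi(x',y_{(1)}) = \varphi(xx',y)$. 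A symmetric argument, using the dual skew-Hopf axiom, treats products in the second argument.

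For part (iii), I would take a structural route: by part (i) combined with the fact that $\omega$ is an algebra automorphism, $\omega$ becomes a bialgebra isomorphism from the coopposite $(\mcd,m,\Delta^{op},\eta,\epsilon)$ onto $(\mcd,m,\Delta,\eta,\epsilon)$. Since the antipode of a Hopf algebra with bijective $S$ coincides on the coopposite with $S^{-1}$ — which applies here because $\mcd$, being a Drinfeld double, has bijective antipode — and since bialgebra isomorphisms intertwine antipodes by uniqueness as convolution inverses of the identity, I would conclude that $\omega\circ S^{-1} = S\circ\omega$, whence $\omega^2 = \mathrm{id}$ gives $S\circ\omega = \omega\circ S^{-1}$. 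The main obstacle I foresee is exactly this last part: a direct verification from the explicit antipode formulas in Definitions \ref{defpositive}(iii) and \ref{defnegative}(iii) would be unwieldy, because those formulas differ by the factor $v^{2\sum_{i<j}\langle\gamma_i,\gamma_j\rangle}$ and by the reversed ordering $g^\pi_{\gamma_m\cdots\gamma_1}$, so I would rely on the structural route above and briefly justify the coopposite-antipode fact in passing rather than attempt the combinatorial identification.
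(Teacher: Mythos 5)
Your proposal is correct, and parts (i) and (ii) essentially coincide with the paper's argument: the paper also reduces (i) to the generators $K_\mu$, $u_\gamma^{\pm}$ using that both composites are algebra morphisms, and proves (ii) by evaluating $\varphi$ on the basis elements $K_\mu u_\alpha^+$, $K_\nu u_\beta^-$ and invoking bilinearity. (Note that in (ii) your inductive extension via the skew-Hopf axioms is redundant: the elements $K_\mu u_\alpha^\pm$ on which you verify the identity already form a linear basis of $\mathcal{H}^{\pm}(\mathscr{A})$ by Definitions \ref{defpositive} and \ref{defnegative}, so bilinearity alone finishes the argument, exactly as in the paper.) Part (iii) is where you genuinely diverge: the paper verifies $S\circ\omega=\omega\circ S^{-1}$ by writing out $S(u_\gamma^-)$ from the explicit antipode formula and identifying the result with $\omega\circ S^{-1}(u_\gamma^+)$ on generators, whereas you argue structurally that $\omega$ is a Hopf isomorphism from $\mcd^{\mathrm{cop}}$ (whose antipode is $S^{-1}$) onto $\mcd$, and that Hopf morphisms intertwine antipodes by uniqueness of convolution inverses. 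Your route is cleaner and avoids the combinatorics of the antipode formula, at the cost of having to know that $S$ is bijective on $\mcd$; your appeal to "Drinfeld double" for this should really be traced back to bijectivity of the antipodes of $\mchp$ and $\mchm$ themselves (which holds by a standard graded/filtration argument, since the degree-zero part is the group algebra $\mbc[\mbz\mci]$ with invertible antipode), but this is a hypothesis the paper also uses silently --- both the skew-Hopf axiom $\phi(S(a),b)=\phi(a,S^{-1}(b))$ and the very statement of part (iii) presuppose it --- so your proof is no less rigorous than the original on this point.
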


\begin{proof} $(i)$ Since $\omega$ and $\Delta$ are algebra morphisms, it suffices
to check for the algebra generators $K_{\mu}$ ($\mu\in\mbz \mci$)
and $u_{\gamma}^{\pm}$ ($\gamma\in\mcp$). We have that
\begin{eqnarray*}
\Delta\circ\omega(K_{\mu})&=&\Delta(K_{-\mu})=K_{-\mu}\otimes
K_{-\mu}=\omega\circ\Delta^{op}(K_{\mu}),\\
\Delta\circ\omega(u_{\gamma}^+)&=&\Delta(u_{\gamma}^-)=\sum_{\alpha,\beta\in\mcp}
v^{\langle\beta,\alpha\rangle}\frac{a_{\beta}a_{\alpha}}{a_{\gamma}}
g^{\gamma}_{\beta\alpha} u_{\alpha}^-\otimes
u_{\beta}^-K_{-\alpha},\\
\omega\circ\Delta^{op}(u_{\gamma}^+)&=&\omega(\sum_{\alpha,\beta\in\mcp}
v^{\langle\alpha,\beta\rangle}
\frac{a_{\alpha}a_{\beta}}{a_{\gamma}} g^{\gamma}_{\alpha\beta}
u_{\beta}^+\otimes u_{\alpha}^+K_{\alpha})\\
&=&\sum_{\alpha,\beta\in\mcp} v^{\langle\alpha,\beta\rangle}
\frac{a_{\alpha}a_{\beta}}{a_{\gamma}} g^{\gamma}_{\alpha\beta}
u_{\beta}^-\otimes u_{\alpha}^-K_{-\beta}. \end{eqnarray*} So
$\Delta\circ\omega(u_{\gamma}^+)=\omega\circ\Delta^{op}(u_{\gamma}^+)$.
Similarly
$\Delta\circ\omega(u_{\gamma}^-)=\omega\circ\Delta^{op}(u_{\gamma}^-)$
holds.

$(ii)$ By the linearity of $\omega$ and the bilinearity of
$\varphi$, it is sufficient to check for basis elements. Take
$x=K_{\mu}u_{\alpha}^+$ and $y=K_{\nu}u_{\beta}^-$. Then
\begin{eqnarray*}
\varphi(\omega(y),\omega(x))&=&\varphi(K_{-\nu}u_{\beta}^+,K_{-\mu}u_{\alpha}^-)
=v^{-(\nu,\mu)+(\beta,\mu)-(\nu,\alpha)}
\frac{1}{a_{\beta}}\delta_{\alpha\beta}\\
&=&\varphi(x,y).\end{eqnarray*}

$(iii)$ It suffices to check for $K_{\mu}$ ($\mu\in\mbz \mci$) and
$u_{\gamma}^{\pm}$ ($\gamma\in\mcp$). We have
$$ S \circ \omega (K_{\mu}) = S(K_{-\mu}) = K_{\mu} = \omega\circ
S^{-1} (K_{\mu}),$$
$$ S \circ \omega (u_{\lambda}^+) = S(u_{\lambda}^-) = \delta_{\gamma0}+
\sum_{m\ge 1} (-1)^m
\sum_{\pi\in\mcp,\gamma_1,\cdots,\gamma_m\in\mcp_1}
\frac{a_{\gamma_1}\cdots a_{\gamma_m}}{a_{\gamma}}
g^{\gamma}_{\gamma_1\cdots\gamma_m}
g^{\pi}_{\gamma_m\cdots\gamma_1} u_{\pi}^-K_{\gamma}$$ $$=\omega
\circ S^{-1}(u_{\lambda}^+),$$ and similar for $u_{\lambda}^-$.
\end{proof}

Consequently, the skew-Hopf pairing
$\varphi:\mchp\times\mchm\lra\mbc$ defined in the last subsection
gives rise to a Hopf pairing $\psi:\mchp\times\mchp\lra\mbc$
defined by $\psi(a,b)=\varphi(a,\omega(b))$. That is, for any $a$,
$a'$ and $b$, $b'$ in $\mchp$, the following holds:

$(i)'$ $\psi(a,1)=\epsilon(a)$, $\psi(1,b)=\epsilon(b)$;

$(ii)'$ $\psi(a,bb')=\psi(\Delta(a),b\otimes b')$;

$(iii)'$ $\psi(aa',b)=\psi(a\otimes a',\Delta(b))$;

$(iv)'$ $\psi(S(a),b)=\psi(a,S(b))$.\\
For any $\mu\in\mbn \mci$ and $\alpha,\beta\in\mcp_{\mu}$, we have
$$\psi(u_{\alpha}^+,u_{\beta}^+)=\varphi(u_{\alpha}^+,u_{\beta}^-)=\frac{1}{a_{\alpha}}\delta_{\alpha\beta}.$$
So the restriction of $\psi$ to $\mfkhp_{\mu}$, and hence to
$\mfkhp=\bigoplus_{\mu\in\mbn \mci}\mfkhp_{\mu}$, is a definite
positive symmetric bilinear form.

\begin{Rem} Note that the Ringel-Hall algebra and the bilinear form $\psi$ can
actually be defined over the rational field $\mathbb{Q}$. So it
makes sense to talk about the definite positivity.
\end{Rem}

\bigskip
\section{Main results}

In Section 4.1 we clarify the relation of the double Ringel-Hall
algebra and its composition subalgebra with generalized Kac-Moody
Lie
algebras.
 \ In Section 4.2 we classify the dimension vectors
of indecomposable objects in the category $\ma$, via the root system
of the generalized Kac-Moody Lie algebra corresponding to the double
composition  algebra.

\subsection{The double composition  algebras}

Recall that $\ma$ is a hereditary abelian finitary length category,
and $\mcd$ the double Ringel-Hall algebra with a triangle
decomposition $\mcd=\mfkhm\otimes\mc{T}\otimes\mfkhp$.

Let $\mcc$ be the subalgebra of $\mcd$ generated by
$\{u_i^{\pm}:i\in \mci\}$ and $\mc{T}$, called the {\em double
composition algebra}. It is a Hopf subalgebra and $\mbz\mci$-graded
as well as $\mcd$. It also admits a triangle decomposition
$\mcc=\mfkcm\otimes\mc{T}\otimes\mfkcp$, where $\mfkcp$ (and
$\mfkcm$) are the subalgebra of $\mcc$ generated by $u_i^+\ (i\in
\mci)$ (and $u_i^-\ (i\in \mci)$, respectively). Let $\mccp$ and
$\mccm$ be the intersection of $\mcc$ with $\mchp$ and $\mchm$
respectively. So the involution $\omega$ of $\mcd$ defined before
Lemma \ref{omega} restricts to an involution of $\mcc$, switching
$u_i^+$ and $u_i^-$. It also induces algebra isomorphisms
$\mfkcp\stackrel{\simeq}{\lra}\mfkcm$ and
$\mccp\stackrel{\simeq}{\lra}\mccm$. Therefore it is clear that the
restriction to $\mccp\times\mccm$ of the bilinear form
$\varphi:\mchp\times\mchm\lra\mbc$, defined before Lemma
\ref{bilineaform}, gives rise to another 
skew-Hopf pairing $(\mccp,\mccm,\varphi)$.

Recall that in Section 3.1 we defined the symmetric Euler form
$(-,-):\mathbb{Z}\mci\times \mb{Z}\mci\lra\mb{Z}$, which is a
generalized Kac-Moody bilinear form by Lemma \ref{genKM}. We write
$C_0$ for the corresponding Borcherds-Cartan matrix. Let
$\mathfrak{g}(C_0)$ be the associated generalized Kac-Moody Lie
algebra and $U_v(\mfk{g}(C_0))$ the quantized enveloping algebra
with generators $E_i, F_i$ ($i\in\mci$) and $K_{\mu}$
($\mu\in\mathbb{Z}\mci$) (see Definition
\ref{quantizedgeneralizedKML}).





\begin{Thm} The map $\Phi:U_v(\mathfrak{g}(C_0))\lra \mcc$ from the quantized enveloping algebra to
the double composition algebra, defined by
$$\Phi(K_i) =K_i,\\\ \Phi(E_i)=u_i^+,\ \  \Phi(F_i) =\begin{cases} -v_iu_i^-&
i\in\mci^{re}\\
 \frac{v^{2{\rm dim}_k{\rm End}_{\ma}(S_i)}-1}{v_i^{-1}-v_i}u_i^- & i\in\mci^{im} \end{cases}$$ is a
Hopf algebra isomorphism. \label{compositiontheorem}
\end{Thm}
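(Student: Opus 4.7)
The plan is to verify that $\Phi$ is a well-defined algebra homomorphism by checking the defining relations of $U_v(\mfk{g}(C_0))$, then to promote it to a Hopf algebra map, and finally to establish bijectivity via the triangular decompositions together with the restricted non-degeneracy of Proposition \ref{pairingquantizedKML}.

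First I would check the relations in Definition \ref{quantizedgeneralizedKML}. Relations (i) and (ii) are immediate from the commutation rule $K_\mu u_\alpha^\pm = v^{\pm(\mu,\alpha)} u_\alpha^\pm K_\mu$ in $\mchp$ and $\mchm$, noting that the numerical scalar in $\Phi(F_i)$ commutes past $K_\mu$. The cross relation (iii) uses Lemma \ref{pmvarphi}: since $\varphi(u_i^+, u_j^-) = \delta_{ij}/a_i$, the commutator $[u_i^+, u_j^-]$ equals $-(\delta_{ij}/a_i)(K_i - K_{-i})$, and a direct check confirms that the prescribed scalars $-v_i$ (for $i \in \mcire$) and $(v^{2\dim_k \text{End}_{\ma}(S_i)} - 1)/(v_i^{-1} - v_i)$ (for $i \in \mciim$) are chosen exactly so that $[\Phi(E_i), \Phi(F_j)] = \delta_{ij}(K_i - K_{-i})/(v_i - v_i^{-1})$, using $a_i = v_i^2 - 1$ in the real case. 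Relation (v) holds because $c_{ij} = 0$ forces $\text{Ext}^1_{\ma}(S_i, S_j) = 0$ and $\text{Hom}_{\ma}(S_i, S_j) = 0$ for $i \ne j$, so in the Hall algebra $u_i^+ u_j^+ = u_{S_i \oplus S_j}^+ = u_j^+ u_i^+$. The Serre relations (iv), (iv)' for $i \in \mcire$ are the most delicate: since $c_{ii} = 2$, the proof of Lemma \ref{genKM} shows $S_i$ has no self-extension, and the classical Ringel-Hall computation inside the Serre subcategory generated by $S_i$ and $S_j$ (as in Ringel \cite{R} and Xiao \cite{X}, adapted to $\text{End}_{\ma}(S_i) = \mb{F}_{q^{\varepsilon_i}}$) yields the quantum Serre identity on the nose.

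Next, compatibility of $\Phi$ with comultiplication, counit and antipode is checked on the generators via Definitions \ref{defpositive}, \ref{defnegative} and the Hopf formulas preceding Proposition \ref{pairingquantizedKML}; the scalar in $\Phi(F_i)$ causes no trouble since $\Delta(u_i^-) = u_i^- \otimes K_{-i} + 1 \otimes u_i^-$ is already of the desired shape. A further key ingredient is that $\Phi$ intertwines the two skew-Hopf pairings, namely
$$\phi(a, b) = \varphi(\Phi(a), \Phi(b)) \quad \text{for all } a \in U_v^{\geq}(\mfk{g}(C_0)),\ b \in U_v^{\leq}(\mfk{g}(C_0)).$$
This identity needs only be verified on the generator pairs $(E_i, F_j)$, $(K_\mu, K_\nu)$, $(E_i, K_\nu)$ and $(K_\mu, F_j)$ by direct substitution; it then propagates to arbitrary $a, b$ via the axioms (i)--(iv) of a skew-Hopf pairing.

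Surjectivity of $\Phi$ is immediate, since $\mcc$ is generated by the images of $E_i$, $F_i$ and $K_\mu$. For injectivity, the triangular decompositions $U_v = U_v^- \otimes U_v^0 \otimes U_v^+$ and $\mcc = \mfkcm \otimes \mc{T} \otimes \mfkcp$ are both preserved by $\Phi$, so the problem reduces to injectivity on each tensor factor; the Cartan parts match tautologically. For the positive part, if $\Phi(x) = 0$ with $x \in U_v^+(\mfk{g}(C_0))$, then $\phi(x, y) = \varphi(\Phi(x), \Phi(y)) = 0$ for every $y \in U_v^-(\mfk{g}(C_0))$, so the restricted non-degeneracy of $\phi$ from Proposition \ref{pairingquantizedKML} forces $x = 0$; the argument on $U_v^-$ is symmetric, using the involution $\omega$ together with Lemma \ref{omega}(ii). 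The main obstacle is the quantum Serre relation verification for the real simple roots — a careful Hall-algebra combinatorial computation in the symmetrizable setting — together with the pairing compatibility identity, whose propagation from generators to arbitrary elements only works once $\Phi$ is already known to be a Hopf algebra map.
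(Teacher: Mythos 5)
Your proposal is correct and follows essentially the same route as the paper: verify the defining relations (with (iii) via Lemma \ref{pmvarphi} and the Serre relations via the standard Hall-algebra computation, which the paper delegates to Propositions 6.2 and 6.3 of Deng--Xiao), check Hopf compatibility on generators, and deduce injectivity from the compatibility of the two skew-Hopf pairings together with the restricted non-degeneracy of Proposition \ref{pairingquantizedKML}. Your explicit verification of the scalars in $\Phi(F_i)$ and of relation (v) merely fills in details the paper leaves implicit, so no substantive difference.
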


\begin{proof} We proceed by two steps. The aim of step one is to show that $\Phi$ is a
well-defined surjective Hopf algebra homomorphism. We claim that
$\Phi(E_i)$, $\Phi(F_i)$ ($i\in\mci$) and $\Phi(K_{\mu})$
($\mu\in\mathbb{Z}\mci$) satisfy the relations $(i)-(v)$ in
Definition \ref{quantizedgeneralizedKML}. Indeed $(i)$ and $(ii)$
follow from Definition \ref{defpositive} and \ref{defnegative}
directly, and $(iii)$ follows from Lemma \ref{pmvarphi}. The
remaining relations $(iv)$, $(iv)'$ and $(v)$ follow from the same
calculation as in Proposition 6.2, 6.3 in \cite{DX1}, using the
fact that $i \in\mci^{re}$ iff $i$ has no self-extensions. Hence
$\Phi$ is a well-defined surjective algebra homomorphism. Since it
is clear that $\Phi$ commutes with the comultiplications and the
antipodes on the generators $E_i$, $F_i$ and $K_i$, step one is
done.

It remains to show that $\Phi$ is injective. Both
$U_v(\mathfrak{g}(C_0))$ and $\mcc$ admit a triangle decomposition
and they are compatible with $\Phi$. Also the Cartan parts are
preserved by $\Phi$. Recall that we have skew-Hopf pairings $\phi:
U_v^{\geq}(\mfkg(C_0))\times U_v^{\leq}(\mfkg(C_0))\lra
\mathbb{C}$ and $\varphi:\mccp\times\mccm\lra\mathbb{C}$. It is
straightforward to check that they are compatible with $\Phi$,
namely $\phi(a,b)=\varphi(\Phi(a),\Phi(b))$, for all $a\in
U_v^{\geq}(\mfk{g}(C_0))$ and $b\in U_v^{\leq}(\mfk{g}(C_0))$. For
example we check now $\phi(E_i,F_j)=\varphi(\Phi(E_i),\Phi(F_j))$.
Indeed if $j$ is real, then $\varphi(\Phi(E_i),\Phi(F_j))
=\varphi(u_i^+, -v_ju_j^-) = -v_i\frac{\delta_{ij}}{v_i^2-1}=
\delta_{ij}\frac{-1}{v_i-v_i^{-1}}= \phi(E_i,F_j)$. If $j$ is
imaginary, then \begin{eqnarray*}
\varphi(\Phi(E_i),\Phi(F_j))&=&\varphi(u_i^+, \frac{v^{2{\rm dim}_k{\rm End}_{\ma}(S_j)}-1}{v_j^{-1}-v_j}u_j^-)\\
&=&\frac{v^{2{\rm dim}_k{\rm
End}_{\ma}(S_j)}-1}{v_j^{-1}-v_j}\frac{\delta_{ij}}{v^{2{\rm
dim}_k{\rm End}_{\ma}(S_j)}-1}\\ &=&
\delta_{ij}\frac{-1}{v_i-v_i^{-1}}\\ &=& \phi(E_i,F_j).
\end{eqnarray*}
Suppose now $u\in U_v^+(\mathfrak{g}(C_0))$ lies in the kernel of
$\Phi$. For any $v\in U_v^-(\mathfrak{g}(C_0))$, we have that
$\phi(u,v)=\varphi(\Phi(u),\Phi(v))=0$. But
$\phi:U_v^{+}(\mfkg(C_0))\times U_v^{-}(\mfkg(C_0))\lra
\mathbb{C}$ is non-degenerate by Proposition
\ref{pairingquantizedKML}. Hence $u$ must be zero. This completes
the proof.
\end{proof}


\subsection{The double Ringel-Hall algebra as the quantized enveloping algebra of a generalized Kac-Moody algebra}

Our next step is to measure the difference between the double
composition algebra $\mcc$ and the double Ringel-Hall algebra
$\mcd$, and to approximate $\mcd$ with the quantized enveloping
algebra of a larger generalized Kac-Moody Lie algebra, following the
method of Sevenhant and Van den Bergh \cite{SV} (used also by Deng
and Xiao \cite{DX1}).

We define $\Xi_0^{\pm}$ to be $\mfk{h}_0^{\pm}=\mb{C}$, and define
$\Xi_{i}^{\pm}$ to be $\mfk{h}^{\pm}(\Ld)_{\pm i}$ with basis
$\{u_i^{\pm}\}$ (for $i\in\mci$). For $\theta\in\mb{N}\mci$,
$\theta\neq 0$ and $\theta\notin\mci$, define $\Xi_{\theta}^{\pm}$
to be the subalgebra of $\mfk{h}^{\pm}(\Ld)_{\pm\theta}$ generated
by
$$\sum_{\mu+\nu=\theta,\mu,\nu\neq\theta}\mfk{h}^{\pm}(\Ld)_{\pm\mu}\mfk{h}^{\pm}(\Ld)_{\pm\nu}.$$
Set
$$L_{\theta}^+=\{x\in\mfk{h}^+(\Ld)_{\theta}:\varphi(x,\Xi_{\theta}^-)=0\}=\{x\in\mfk{h}^+(\Ld)_{\theta}:\psi(x,\Xi_{\theta}^+)=0\},$$
$$L_{\theta}^-=\{y\in\mfk{h}^-(\Ld)_{-\theta}:\varphi(\Xi_{\theta}^+,y)=0\}=\{y\in\mfk{h}^-(\Ld)_{-\theta}:\psi(y,\Xi_{\theta}^-)=0\}.$$
It is easy to see that
$\mfkhpm_{\pm\theta}=\Xi_{\theta}^{\pm}\oplus L_{\theta}^{\pm}$ as
a $\mb{C}$-vector space, and that $\omega: \Xi_{\theta}^+
\stackrel{\simeq}{\lra} \Xi_{\theta}^-$, $\omega: L_{\theta}^+
\stackrel{\simeq}{\lra} L_{\theta}^-$. Similar to Lemma 3.1 in
\cite{SV}, we show that elements in $L_{\theta}^{\pm}$ are
primitive, as well as $u_i^{\pm}$.

\begin{Lem}\label{primitivelement} (i) For any $x\in L_{\theta}^+$ and $y\in L_{\theta}^-$,
we have that
$$\Delta(x)=x\otimes 1+K_{\theta}\otimes x,\ \ S(x)=-K_{-\theta}x,$$
$$\Delta(y)=1\otimes y+y\otimes K_{-\theta},\ \ S(y)=-yK_{\theta}.$$

(ii) For any $x\in L_{\theta}^+$ and $y\in L_{\theta}^-$, we have
that $$xy-yx=-\varphi(x,y)(K_{\theta}-K_{-\theta}).$$
\end{Lem}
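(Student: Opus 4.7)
The plan is to establish (i) first, then deduce (ii) by replaying the proof of Lemma \ref{pmvarphi} with the simple generators $(u_i^+,u_j^-)$ replaced by $(x,y)$.

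For (i), I would exploit the $\mathbb{N}\mci$-bigrading on $\mchp\otimes\mchp$. From the explicit formula in Definition \ref{defpositive}, any $x\in\mfkhp_\theta$ comultiplies as
$$\Delta(x) = x\otimes 1 + K_\theta\otimes x + R,$$
where $R$ collects the bidegree pieces $(\mu,\nu)$ with $\mu+\nu=\theta$ and $0<\mu,\nu<\theta$. The task reduces to showing $R=0$. For such $\mu,\nu$ and any $y_1\in\mfkhm_{-\mu}$, $y_2\in\mfkhm_{-\nu}$, the product $y_1y_2$ lies in $\Xi_\theta^-$ by definition, so $\varphi(x,y_1y_2)=0$ because $x\in L_\theta^+$. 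Expanding via the skew-Hopf axiom as $\varphi(\Delta(x),y_1\otimes y_2)$ and observing that the $x\otimes 1$ and $K_\theta\otimes x$ contributions vanish for degree reasons (both $\varphi(1,y_2)=\epsilon(y_2)$ and $\varphi(K_\theta,y_1)$ are zero, since $y_1,y_2$ have strictly positive $\mfkhm$-degree), one concludes $\varphi(R,y_1\otimes y_2)=0$ for all admissible $y_1,y_2$. The restriction of $\varphi$ to $\mfkhp_\mu\times\mfkhm_{-\mu}$ is essentially diagonal in the Hall basis, hence non-degenerate, so each bidegree component of $R$ vanishes individually. The antipode identity $S(x)=-K_{-\theta}x$ then follows immediately from the Hopf axiom $m\circ(S\otimes\mathrm{id})\circ\Delta(x)=\epsilon(x)\cdot 1=0$. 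The dual statements for $y\in L_\theta^-$ follow either symmetrically or, more cleanly, by applying the anti-coalgebra involution $\omega$ from Lemma \ref{omega}(i).

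For (ii), with (i) in hand a direct iteration gives
$$\Delta^2(x) = x\otimes 1\otimes 1 + K_\theta\otimes x\otimes 1 + K_\theta\otimes K_\theta\otimes x,$$
together with the analogous formula for $\Delta^2(y)$. The computation of $yx$ in the reduced Drinfeld double $\mcd$ then becomes word-for-word the same nine-term computation performed in the proof of Lemma \ref{pmvarphi}, with the labels $i,j$ replaced by the common degree $\theta$; only the contributions $-\varphi(x,y)K_{-\theta}$, $xy$, and $\varphi(x,y)K_\theta$ survive, yielding the stated commutator.

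The main obstacle is the vanishing of $R$ in the first step. The delicate point is that each bidegree piece of $R$ carries a Cartan factor $K_\nu$ on the left tensor slot, so one must verify that the formula $\varphi(u_\alpha^+K_\nu,u_{\alpha'}^-)=\delta_{\alpha,\alpha'}/a_\alpha$ still separates the coefficients of $R$ when tested against pure Hall generators (the $v$-factors produced by commuting $K_\nu$ past $u_\alpha^+$ cancel precisely against those from the pairing formula). Once this is checked, part (ii) is routine bookkeeping parallel to Lemma \ref{pmvarphi}.
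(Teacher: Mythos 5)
Your proof is correct and takes essentially the same route as the paper's: both arguments kill the middle (bidegree $(\mu,\nu)$ with $0<\mu,\nu<\theta$) terms of $\Delta(x)$ by pairing against products of two lower-degree elements, which lie in $\Xi_{\theta}^{\mp}$ and hence pair to zero with $L_{\theta}^{\pm}$, and then invoking non-degeneracy of the pairing on each graded piece (the paper packages this via an orthonormal basis for the positive-definite form $\psi$, you test $\varphi$ directly against the Hall basis -- the same computation). Part (ii) is likewise handled in both cases by rerunning the Drinfeld-double calculation of Lemma \ref{pmvarphi} with the primitive elements $x,y$ in place of $u_i^+,u_j^-$.
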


\begin{proof} $(i)$ We only need to prove the formula for $x\in L_{\theta}^+$, and
then apply the involution $\omega$ to obtain the formula for $y\in
L_{\theta}^-$.

For $\theta\in\mbn \mci$, let us take a normal orthogonal basis
$\{x_{(\theta,p)}:1\le p\le \text{dim}_{\mbc}L_{\theta}^+\}$ of
$L_{\theta}^+$, with respect to the definite positive bilinear
form $\psi:\mfkhp_{\theta}\times\mfkhp_{\theta}\lra\mbc$ defined
in the end of the last subsection. Write $\mc{J}$ for the index
set $\{(\theta,p):\theta\in\mbn \mci,\ 1\le p\le
\text{dim}_{\mbc}L_{\theta}^+\}$. Then
$\{x_{(\theta,p)}:(\theta,p)\in\mc{J}\}$ is a normal orthogonal
basis of $\oplus_{\theta\in\mbn \mci} L_{\theta}^+$. We extend it
to a normal orthogonal basis
$\{x_{(\theta,p)}:(\theta,p)\in\mc{J}'\}$ of $\mfkhp$, where
$\mc{J}'$ stands for the index set $\{(\theta,p):\theta\in\mbn
\mci,\ 1\le p\le \text{dim}_{\mbc}\mfkhp_{\theta}\}$. In
particular
$\psi(x_{(\theta,p)},x_{(\theta',p')})=\delta_{\theta,\theta'}\delta_{p,p'}$.
Note that
$\{x_{(\theta,p)}:(\theta,p)\in\mc{J}'\backslash\mc{J}\}$ forms a
basis of $\oplus_{\theta\in\mbn \mci} \Xi_{\theta}^+$, and each
element $x_{(\theta,p)}$ is homogeneous of degree $\theta$. For
example when $\theta=0$, $\mfkhp_0$ is one dimensional and
$x_{(0,1)}=1$. When $\theta=i\in\mci$, $\mfkhp_{i}=\Xi_{i}^+$ is
one dimensional and $x_{(i,1)}=\sqrt{a_i}\cdot u_i^+$, where $a_i$
is the cardinality of the automorphism group of the simple object
$S_i$ in $\ma$.

Suppose the comultiplication $\Delta$ sends a basis element
$x_{(\theta,p)}$ to a linear combination of the form
$\sum_{(\theta_1,p_1),(\theta_2,p_2)\in\mc{J}'}
c_{(\theta_1,p_1),(\theta_2,p_2)}x_{(\theta_1,p_1)}K_{\theta_2}\otimes
x_{(\theta_2,p_2)}$ with complex coefficients. Note that
$c_{(\theta,p),(0,1)}=1=c_{(0,1),(\theta,p)}$. 
For any $(\tau_1,q_1),(\tau_2,q_2)\in \mc{J}'$, we have
\begin{eqnarray*}\psi(x_{(\theta,p)},x_{(\tau_1,q_1)}x_{(\tau_2,q_2)})&=&\psi(\Delta(x_{(\theta,p)}),x_{(\tau_1,q_1)}\otimes
x_{(\tau_2,q_2)})\\ &=&\sum
c_{(\theta_1,p_1),(\theta_2,p_2)}
\psi(x_{(\theta_1,p_1)}K_{\theta_2},x_{(\tau_1,q_1)})\psi(x_{(\theta_2,p_2)},x_{(\tau_2,q_2)})\\
&=&c_{(\tau_1,q_1),(\tau_2,q_2)},\end{eqnarray*} because
$\psi(x_{(\theta_2,p_2)},x_{(\tau_2,q_2)})=\delta_{\theta_2,\tau_2}\delta_{p_2,q_2}$,
and
$\psi(x_{(\theta_1,p_1)}K_{\theta_2},x_{(\tau_1,q_1)})=\psi(x_{(\theta_1,p_1)}\otimes
K_{\theta_2},\Delta(x_{(\tau_1,q_1)}))=\psi(x_{(\theta_1,p_1)}\otimes
K_{\theta_2},x_{(\tau_1,q_1)}\otimes
1)=\delta_{\theta_1,\tau_1}\delta_{p_1,q_1}$.

Now we take $(\theta,p)$ from $\mc{J}$ so that $x_{(\theta,p)}$
belongs to $L_{\theta}^+$. Then the bilinear form
$\psi(x_{(\theta,p)},x_{(\tau_1,q_1)}x_{(\tau_2,q_2)})$ is nonzero
only when either $\tau_1=\theta$ and $\tau_2=0$, or $\tau_1=0$ and
$\tau_2=\theta$. In the first case
$x_{(\tau_1,p_1)}=x_{(\theta,p)}$ and $x_{(\tau_2,q_2)}=1$, and in
the other case $x_{(\tau_1,p_1)}=1$ and
$x_{(\tau_2,q_2)}x_{(\theta,p)}$. Hence
$\Delta(x_{(\theta,p)})=x_{(\theta,p)}\otimes 1+K_{\theta}\otimes
x_{(\theta,p)}$. The first formula follows since the
comultiplication $\Delta$ is linear.

By definition of the comultiplication and counit we have for $x\in
L_{\theta}^+$ that $x=(\epsilon\otimes
1)\circ\Delta(x)=\epsilon(x)+\epsilon(K_{\theta})x=\epsilon(x)+x$.
Hence $\epsilon(x)=0$. By definition of the antipode we have that
$0=(S\otimes
id)\circ\Delta(x)=S(x)+S(K_{\theta})x=S(x)+K_{-\theta}x$. Hence
$S(x)=-K_{-\theta}x$.

$(ii)$ follows from $(i)$ and similar calculation with Lemma
\ref{pmvarphi}.
\end{proof}

We enlarge the index set $\mci$ to $\mci'=\mci\cup \mathcal {J}$,
where $\mathcal {J}=\{(\theta ,p):\theta\in\mbn \mci,1\le p\le
\text{dim}_{\mb{C}}L_{\theta}^+\}$. There exists a natural linear
map $\varpi:\mb{Z}\mci'\lra\mb{Z}\mci$ defined by $\varpi(i)=i$ for
$i\in \mci$, and $\varpi(j)=\theta$ for $j=(\theta,p)\in \mathcal
{J}$. The generalized Kac-Moody bilinear form on $\mbz\mci$ can be
extended to a bilinear form on $\mbz\mci'$ by
$(i,j)'=(\varpi(i),\varpi(j))$ for $i,j\in\mc{I}'$. We are going to
show that it is again a generalized Kac-Moody bilinear form (with
similar argument as Proposition 3.2 in \cite{SV}), and hence
determines a Borcherds datum $(\mci',(-,-)')$.

Note that $\mci'$ is a subset of $\mcj'$ appeared in the proof
above. To each $i\in\mci'$, we have also associated a primitive
homogeneous element
$x_i\in\mfkhp$ with degree $\varpi(i)$. Moreover, 
$\psi(x_i,x_j)=\delta_{ij}$.

\begin{Prop}\label{bilinearform} The following holds for any $i,j\in\mci'$,

$(i)$ if $i\neq j$, $(i,j)'\le 0$;

$(ii)$ if $j\in\mc{J}$, $(j,j)'\leq 0$;

$(iii)$ if $(i,i)'>0$, $\frac{2(i,j)'}{(i,i)'}\in\mbz$. \\In
particular, $(-,-)':\mb{Z}\mci'\times\mb{Z}\mci'\lra\mb{Z}$ is a
generalized Kac-Moody bilinear form.
\end{Prop}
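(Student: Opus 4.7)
The three conditions will be established in the order (iii), (i), (ii), with the bulk of the work in the last two.

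For (iii), suppose $(i,i)'>0$. Part (ii) rules out $i\in\mcj$, so $i\in\mci$ with $(i,i)>0$. Writing $\varpi(j)=\sum_{k\in\mci}n_k k$ with $n_k\in\mbn$, linearity of $(-,-)'$ along $\varpi$ gives
\[
\frac{2(i,j)'}{(i,i)'}=\sum_{k\in\mci}n_k\,\frac{2(i,k)}{(i,i)},
\]
which is a sum of integers by Lemma~\ref{genKM}.

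For (i) and (ii), the cases with both indices in $\mci$ are Lemma~\ref{genKM}. The mixed subcase of (i) with $i\in\mci^{im}$ and $j\in\mcj$ is also immediate: writing $\theta=\varpi(j)=\sum_k n_k k$, we have $(i,\theta)=n_i(i,i)+\sum_{k\ne i}n_k(i,k)\le 0$, since each term is nonpositive. The remaining subcases --- namely (ii), (i) with $i\in\mci^{re}$ and $j\in\mcj$, and (i) with both indices in $\mcj$ --- all rely on the fact that the associated primitive element $x_j\in L^+_{\varpi(j)}$ satisfies $\psi(x_j,x_j)=1$ and $\psi(x_j,\Xi^+_{\varpi(j)})=0$, established in Lemma~\ref{primitivelement}(i) and the orthonormal-basis construction preceding it.

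For these remaining subcases, the plan is to follow the strategy of Sevenhant--Van den Bergh \cite[Prop.~3.2]{SV}. Assuming for contradiction the reverse of the desired inequality, one uses the commutator identity $xy-yx=-\varphi(x,y)(K_\theta-K_{-\theta})$ from Lemma~\ref{primitivelement}(ii), together with the primitivity-driven expansion
\[
\Delta\bigl((x_j)^n\bigr)=\sum_{p=0}^{n}\binom{n}{p}_{v_\theta}K_\theta^{\,n-p}(x_j)^{n-p}\otimes(x_j)^p,
\]
to derive a quantum-binomial relation among iterated commutators of $x_j$ with $\omega(x_j)$ (resp.\ with $u_i^+$ in the mixed case, resp.\ with $x_i$ in the two-$\mcj$ case). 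Pairing both sides of this relation against a carefully chosen test element in $\mchm$ under the non-degenerate skew-Hopf pairing $\varphi$ yields a polynomial identity in $v$ whose sign is controlled by the hypothesised sign of $(\theta,\theta)$ (resp.\ $(i,\theta)$, resp.\ $(\theta_1,\theta_2)$). This sign is incompatible with the positive-definiteness of $\psi$ on the relevant homogeneous component of $\mfkhp$, yielding the contradiction.

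The main obstacle is the combinatorial bookkeeping in the quantum-binomial expansion and the explicit choice of test element against which to evaluate $\varphi$; this is the technical core of \cite{SV}. The Hopf-algebraic framework developed in Section~3 and Lemma~\ref{primitivelement} --- in particular the skew-Hopf pairing $\varphi$, the involution $\omega$, and the positive-definite pairing $\psi$ on $\mfkhp$ --- has been assembled precisely so that the Sevenhant--Van den Bergh argument transfers to the present level of generality without essential modification.
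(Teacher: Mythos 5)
There is a genuine gap: for the cases you identify as hard --- $(i,j)'\le 0$ with $i\in\mci^{re}$ and $j\in\mcj$, and with both indices in $\mcj$, as well as your treatment of (ii) --- you give only a plan (``follow the strategy of Sevenhant--Van den Bergh'') and explicitly leave the ``combinatorial bookkeeping'' and the ``choice of test element'' unresolved. That bookkeeping is the entire content of the claim in those cases, so the inequality is not actually established. Moreover, the quantum-binomial/iterated-commutator machinery you invoke is far heavier than what is needed, and it is not clear it would close without the very sign information you are trying to prove.

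The paper's argument for (i) is uniform over all $i\ne j$ in $\mci'$ and takes two lines once Lemma \ref{primitivelement} is in place: using $\Delta(x_ix_j)=x_ix_j\otimes 1+x_iK_{\deg x_j}\otimes x_j+K_{\deg x_i}x_j\otimes x_i+K_{\deg x_i}K_{\deg x_j}\otimes x_ix_j$ and the Hopf-pairing property $(iii)'$ of $\psi$, one computes directly that $\psi(x_ix_j,x_ix_j)=1$ and $\psi(x_ix_j,x_jx_i)=\psi(K_{\deg x_i},K_{\deg x_j})=v^{(i,j)'}$. Positive definiteness of $\psi$ on the homogeneous component $\mfkhp_{\varpi(i)+\varpi(j)}$ forces the $2\times 2$ Gram matrix of $\{x_ix_j,\,x_jx_i\}$ to be positive semi-definite, i.e.\ $a^2+2v^{(i,j)'}ab+b^2\ge 0$ for all real $a,b$, whence $v^{(i,j)'}\le 1$ and $(i,j)'\le 0$ since $v>1$. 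No case division, no commutator identity from Lemma \ref{primitivelement}(ii), and no quantum binomials are needed. You should also note that (ii) is then immediate and does not require any separate hard argument: writing $\varpi(j)=\sum_k a_kk$ with $a_k\ge 0$, one has $(j,j)'=\sum_k a_k(k,j)'\le 0$ because each $(k,j)'\le 0$ by (i) (here $k\in\mci$ and $j\in\mcj$ are automatically distinct). Your parts (iii) and the easy subcases of (i) agree with the paper.
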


\begin{proof} $(i)$ Take any $i,j\in\mci'$ and $i\ne j$. We have
\begin{eqnarray*}
\Delta(x_ix_j)&=&\Delta(x_i)\Delta(x_j)=(x_i\otimes
1+K_{\text{deg}x_i}\otimes x_i)(x_j\otimes
1+K_{\text{deg}x_j}\otimes x_j)\\
&=&x_ix_j\otimes 1+x_i K_{\text{deg}x_j}\otimes
x_j+K_{\text{deg}x_i}x_j\otimes
x_i+K_{\text{deg}x_i}K_{\text{deg}x_j}\otimes
x_ix_j.\end{eqnarray*} Hence
\begin{eqnarray*}
\psi(x_ix_j,x_ix_j)&=&\psi(\Delta(x_ix_j),x_i\otimes x_j)
=\psi(x_iK_{\text{deg}x_j},x_i)\psi(x_j,x_j)\\
&=&\psi(x_i\otimes K_{\text{deg}x_j},\Delta(x_i))=\psi(x_i\otimes
K_{\text{deg}x_j},x_i\otimes 1+K_{\text{deg}x_i}\otimes x_i)\\
&=&1,\end{eqnarray*} and similarly
\begin{eqnarray*}\psi(x_ix_j,x_jx_i)&=&\psi(\Delta(x_ix_j),x_j\otimes
x_i)
=\psi(K_{\text{deg}x_i}x_j,x_j)\psi(x_i,x_i)\\
&=&\psi(K_{\text{deg}x_i}\otimes x_j,\Delta(x_j))
=\psi(K_{\text{deg}x_i}\otimes x_j,x_j\otimes 1+K_{\text{deg}x_j}\otimes x_j)\\
&=&\psi(K_{\text{deg}x_i},K_{\text{deg}x_j})=v^{(i,j)'}.\end{eqnarray*}
Now for any $a,b\in\mathbb{R}$, 
because $\psi$ is positive definitive, $$0\le
\psi(ax_ix_j+bx_jx_i,ax_ix_j+bx_jx_i)=a^2+2v^{(i,j)'}ab+b^2.$$ It
follows that $v^{(i,j)'}\le 1$, and therefore $(i,j)'\le 0$, as
$v=\sqrt{q}>1$.

$(ii)$ Suppose $j\in\mcj$. Then $(i,j)'\le 0$ for any $i\in \mci$
by $(i)$. It follows that $(i,j)'\le0$ for any $i\in\mci'$. In
particular $(i,i)'\le 0$.

$(iii)$ Suppose $(i,i)'>0$. Then $i\in \mci$ by $(ii)$. Write
$\varpi(j)=\sum_{k\in \mci}a_k k$ where $a_k\in\mbz$. Then
$\frac{2(i,j)'}{(i,i)'}=\sum_{k\in
\mci}a_k\frac{2(i,k)}{(i,i)}\in\mbz$, because
$\frac{2(i,k)}{(i,i)}\in\mbz$ in an integer by Proposition
\ref{genKM}.
\end{proof}

It follows from the definition of $L_{\theta}^+$ that $x_i$
($i\in\mci'$) generates $\mfkhp$. Dually $y_i=\omega(x_i)$
($i\in\mci'$) generates $\mfkhm$. Let us denoted by
$C=(c_{ij}')_{i,j\in\mci'}$ (with symmetrization $\varepsilon_i$)
the symmetrizable Borcherds-Cartan matrix corresponding to the
Borcherds datum $(\mci',(-,-)')$. Namely
$$c_{ij}'=\begin{cases}
                     \frac{2(i,j)'}{(i,i)'}  &\text{if} (i,i)'>0  \\
                     (i,j)'  & \text{otherwise} \end{cases}$$
with symmetrization
$$\varepsilon_i=\begin{cases}
                   \frac{(i,i)'}{2}  & \text{if}  (i,i)'>0 \\
                   1 & \text{otherwise}.  \end{cases}$$
It is clear that the Borcherds-Cartan matrix $C_0=(c_{ij})$ with
index set $\mci$, is a submatrix of $C$.

\begin{Lem} The
following relations hold in $\mcd$:
$$\sum_{p=0}^{1-c_{ij}'}(-1)^p \left[ \begin{array}{c} 1-c_{ij}' \\
p\\ \end{array}\right]_{v_i} x_i^p x_j x_i^{1-c_{ij}'-p}=0,\
\forall\ i\in\mci'^{re}=\mcire,\ j\in\mci'\ \text{with}\ i\ne j,$$
$$\sum_{p=0}^{1-c_{ij}'}(-1)^p \left[ \begin{array}{c} 1-c_{ij}' \\
p\\ \end{array}\right]_{v_i} y_i^p y_j y_i^{1-c_{ij}'-p}=0,\
\forall\ i\in\mci'^{re}=\mcire,\ j\in\mci'\ \text{with}\ i\ne j,$$
$$x_ix_j-x_jx_i=0=y_iy_j-y_jy_i,\ \forall i,j\in\mci'\ \text{with
}c_{ij}'=0,$$ where $v_i=v^{\varepsilon_i}$ for $i\in\mci'$.
\label{maintheorem}
\end{Lem}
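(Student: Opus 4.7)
The plan is to exploit the non-degeneracy (in fact positive-definiteness) of the Hopf pairing $\psi$ on each homogeneous component $\mfkhp_\theta$, established at the end of Section 3.2. An element of $\mfkhp_\theta$ vanishes iff it pairs trivially with every element of the same component, so it suffices to verify that
\[
X_{ij}^+ \;=\; \sum_{p=0}^{r}(-1)^p \left[\begin{array}{c} r\\ p\\ \end{array}\right]_{v_i} x_i^p\, x_j\, x_i^{r-p}, \qquad r = 1-c_{ij}',
\]
pairs to zero against every monomial $x_{k_1}\cdots x_{k_m}$ ($k_\ell\in\mci'$) of degree $r\varpi(i)+\varpi(j)$. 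The $y$-identities will then follow automatically by applying the algebra involution $\omega$ of Lemma \ref{omega}, since $\omega$ sends $x_i\mapsto y_i$, $K_\mu\mapsto K_{-\mu}$ and intertwines the relevant pairings.

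To compute $\psi(X_{ij}^+,\,x_{k_1}\cdots x_{k_m})$ I would iterate the axiom $\psi(ab,c)=\psi(a\otimes b,\Delta(c))$ to rewrite it as $\psi\bigl(\Delta^{(m-1)}(X_{ij}^+),\,x_{k_1}\otimes\cdots\otimes x_{k_m}\bigr)$. By Lemma \ref{primitivelement} each $x_k$ ($k\in\mci'$) is primitive, i.e. $\Delta(x_k)=x_k\otimes 1+K_{\varpi(k)}\otimes x_k$, and one has the braiding $K_\mu x_k = v^{(\mu,\varpi(k))}x_k K_\mu$. Expanding $\Delta^{(m-1)}$ of each word $x_i^p x_j x_i^{r-p}$ with these rules, and using the orthonormality $\psi(x_k,x_l)=\delta_{kl}$ together with $\psi(K_\mu,x_l)=0$, only monomials that are permutations of $r$ copies of $x_i$ and one $x_j$ can contribute. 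The resulting scalar collects the powers of $v_i=v^{\varepsilon_i}$ produced when moving $K_{\varpi(i)}$'s past factors $x_i,x_j$, and reduces to the classical quantum Serre sum
\[
\sum_{p=0}^{r}(-1)^p \left[\begin{array}{c} r\\ p\\ \end{array}\right]_{v_i} v_i^{\,p(r-1-p) + p\,c_{ij}'}\;=\;0,
\]
which vanishes by the $v_i$-binomial identity precisely because $r=1-c_{ij}'$ and $i\in\mcire$ (so $v_i^2\neq 1$ and all $v_i$-binomials are well-defined and nonzero). The commutativity relation ($c_{ij}'=0$) is the degenerate case: then $(\varpi(i),\varpi(j))=0$ makes every braiding factor equal to $1$, the coproduct of $x_ix_j-x_jx_i$ collapses to the two primitive terms $(x_ix_j-x_jx_i)\otimes 1+K_{\varpi(i)+\varpi(j)}\otimes(x_ix_j-x_jx_i)$, and its pairing with any monomial vanishes termwise.

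The principal obstacle is the coproduct bookkeeping: one must track all powers of $v_i$ arising from the iterated commutations $K_{\varpi(i)} x_k = v^{(\varpi(i),\varpi(k))} x_k K_{\varpi(i)}$ inside $\Delta^{(m-1)}(x_i^p x_j x_i^{r-p})$, and organise the $v_i$-binomial coefficients in $X_{ij}^+$ so that the standard cancellation applies. This is essentially the computation carried out in Sevenhant--Van den Bergh \cite{SV} (Proposition 3.3) and Deng--Xiao \cite{DX1} (Propositions 6.2 and 6.3), and it transports verbatim to the present setting once the input data $(\mci',(-,-)')$ and the primitive orthonormal generators $\{x_i\}_{i\in\mci'}$ are in place, with $\mci'^{re}=\mcire$ as recorded in Proposition \ref{bilinearform}(ii). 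No new conceptual ingredient beyond primitivity, orthonormality, and the positive definiteness of $\psi$ is required.
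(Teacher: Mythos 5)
Your proposal is correct and takes essentially the same route as the paper: the paper's entire proof is a deferral to ``the same calculation as Propositions 6.2 and 6.3 in \cite{DX1}'' (having first secured, in Proposition \ref{bilinearform}, that $(-,-)'$ is a generalized Kac--Moody bilinear form), and that cited calculation is exactly the pairing argument you outline --- positive definiteness of $\psi$ on each homogeneous component, primitivity and orthonormality of the generators $x_i$, reduction to a $v_i$-binomial cancellation, and the involution $\omega$ for the $y$-relations. You have simply unpacked the computation that the paper cites without reproducing.
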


\begin{proof} We have shown in Proposition \ref{bilinearform} that
$(-,-)'$ is a generalized Kac-Moody bilinear form. Now the Lemma
follows from the same calculation as Proposition 6.2 and 6.3 in
\cite{DX1}.
\end{proof}

Let $\mfkg(C)$ be the generalized Kac-Moody algebra associated to
$C$ and $U_v(\mfkg(C))$ the quantized enveloping algebra. Recall
that $U_v(\mfkg(C))$ is generated by $E_i,F_i$ ($i\in\mci'$) and
$K_{\mu}$ ($\mu\in\mbz\mci'$) with respect to the relations given
in Definition \ref{quantizedgeneralizedKML}.

\begin{Thm} The map $\Psi:U_v(\mfk{g}(C))\lra\mcd$
from the quantized enveloping algebra to the double Ringel-Hall
algebra, defined by
$$\Psi(E_i)=x_i,\ \Psi(F_i)=\frac{1}{v_i^{-1}-v_i}y_i,\ \forall\ i\in \mci',$$
$$\Psi(K_{\mu})=K_{\varpi(\mu)},\ \forall\ \mu\in\mbz\mci',$$ is a Hopf algebra
epimorphism. Moreover, the restriction of $\Psi$ to $\mfkhp$ and
$\mfkhm$ gives rise to algebra isomorphisms to $U_v^+(\mfk{g}(C))$
and $U_v^-(\mfk{g}(C))$ respectively. \label{doubletheorem}
\end{Thm}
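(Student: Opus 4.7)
The plan is to follow the blueprint of Theorem \ref{compositiontheorem}: first establish that $\Psi$ is a well-defined surjective Hopf algebra homomorphism by verifying the defining relations of Definition \ref{quantizedgeneralizedKML} on generators, and then upgrade the positive and negative parts to isomorphisms by exploiting the non-degeneracy of skew-Hopf pairings.

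For well-definedness, relations $(i)$ and $(ii)$ on the Cartan generators follow from $\Psi(K_\mu) = K_{\varpi(\mu)}$ together with the identity $(\mu,\nu)' = (\varpi(\mu),\varpi(\nu))$ built into $(-,-)'$. Relations $(iv)$, $(iv)'$ and $(v)$ are precisely Lemma \ref{maintheorem}. The central point is relation $(iii)$: the claim is that for all $i,j\in\mci'$,
$$x_i y_j - y_j x_i = -\delta_{ij}\bigl(K_{\varpi(i)} - K_{-\varpi(i)}\bigr).$$
Since $x_i$ and $y_j$ are primitive of degrees $\varpi(i)$ and $-\varpi(j)$ by Lemma \ref{primitivelement}$(i)$, rerunning the Drinfeld-double calculation of Lemma \ref{pmvarphi} verbatim with $u_i^+, u_j^-$ replaced by $x_i, y_j$ yields $x_i y_j - y_j x_i = -\varphi(x_i, y_j)(K_{\varpi(i)} - K_{-\varpi(j)})$; the coefficient collapses via $\varphi(x_i, y_j) = \varphi(x_i, \omega(x_j)) = \psi(x_i, x_j) = \delta_{ij}$, which is the orthonormality established in the proof of Lemma \ref{primitivelement}. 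The normalization $\Psi(F_i) = (v_i^{-1}-v_i)^{-1} y_i$ then converts this into the exact form of relation $(iii)$. Hopf-algebra compatibility on generators is immediate because $x_i, y_i$ are primitive and $K_\mu$ is group-like in $\mcd$, matching the Hopf structure of $U_v(\mfkg(C))$ on $E_i, F_i, K_\mu$; hence $\Psi$ commutes with $\Delta$, $\epsilon$ and $S$. Surjectivity is clear because $\{x_i\}_{i\in\mci'}$ generates $\mfkhp$, $\{y_i\}_{i\in\mci'}$ generates $\mfkhm$, and $\varpi$ surjects onto $\mbz\mci$ since $\mci\subset\mci'$.

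For the positive and negative parts, I will intertwine the skew-Hopf pairings: $\phi(a,b) = \varphi(\Psi(a), \Psi(b))$ for $a \in U_v^{\geq}(\mfkg(C))$ and $b \in U_v^{\leq}(\mfkg(C))$. Because $\Psi$ is a Hopf algebra morphism and both pairings obey the same skew-Hopf axioms, it suffices to check on generators: $\phi(E_i, F_j) = \delta_{ij}\cdot(-1)/(v_i - v_i^{-1}) = \varphi(x_i, (v_j^{-1}-v_j)^{-1}y_j)$, $\phi(K_\mu, K_\nu) = v^{-(\mu,\nu)'} = v^{-(\varpi(\mu),\varpi(\nu))} = \varphi(K_{\varpi(\mu)}, K_{\varpi(\nu)})$, and the mixed Cartan--Chevalley pairings vanish on both sides. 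If $u \in U_v^+(\mfkg(C))$ lies in $\ker\Psi$, then $\phi(u, v) = \varphi(\Psi(u), \Psi(v)) = 0$ for every $v\in U_v^-(\mfkg(C))$, and Proposition \ref{pairingquantizedKML} forces $u = 0$. Combined with the already-proven surjectivity of $\Psi|_{U_v^+}$ onto $\mfkhp$, this shows $\Psi|_{U_v^+}$ is an algebra isomorphism; the negative case follows by applying $\omega$.

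The main obstacle is verifying relation $(iii)$ in full generality over $\mci'$: when $i$ or $j$ lies in $\mcj$, the element $x_i$ or $y_j$ is not a Chevalley generator of the Ringel-Hall algebra but a higher-degree primitive element in $L_\theta^\pm$. The Drinfeld-double calculation depends only on primitivity and hence transfers without change, while the orthonormality of the basis $\{x_i\}_{i\in\mci'}$ is exactly what diagonalizes $\varphi(x_i, y_j)$ to produce the expected $\delta_{ij}$. A secondary bookkeeping issue is the scalar $v_i = v^{\varepsilon_i}$ in the normalization of $\Psi(F_i)$: this is the unique normalization that makes relation $(iii)$ and the intertwining of pairings hold simultaneously.
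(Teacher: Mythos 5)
Your proposal is correct and follows essentially the same route as the paper: relations $(i)$--$(ii)$ from homogeneity, relation $(iii)$ from the primitivity-based commutator formula (the paper's Lemma \ref{primitivelement}$(ii)$, which you re-derive from Lemma \ref{pmvarphi}) together with $\varphi(x_i,y_j)=\psi(x_i,x_j)=\delta_{ij}$, the Serre-type relations from Lemma \ref{maintheorem}, and injectivity on $U_v^{\pm}$ via intertwining the skew-Hopf pairings and invoking Proposition \ref{pairingquantizedKML}. No substantive differences.
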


\begin{proof} We use the same strategy as in Theorem \ref{compositiontheorem}.
Firstly we show that $\Psi(E_i)$, $\Psi(F_i)$ ($i\in\mci'$) and
$\Psi(K_{\mu})$ ($\mu\in\mathbb{Z}\mci')$ satisfy the relations in
Definition \ref{quantizedgeneralizedKML}. Indeed the relations
$(i)$ and $(ii)$ follow from Definition \ref{defpositive} and
\ref{defnegative} and the fact that $x_i$ and $y_i$ are
homogeneous. The relation $(iii)$ follows from Lemma
\ref{primitivelement} $(ii)$: for $i=(\theta,p)\in\mci'$ (hence
$\theta=\varpi(i)$),
\begin{eqnarray*}
\Psi(E_i)\Psi(F_j)-\Psi(F_j)\Psi(E_i)&=&\frac{1}{v_j^{-1}-v_j}(x_iy_j-y_jx_i)\\
&=&\frac{1}{v_j^{-1}-v_j} (-\varphi(x_i,y_j)
(K_{\theta}-K_{-\theta}))\\ &=& \frac{1}{v_j^{-1}-v_j}
(-\psi(x_i,x_j) (K_{\theta}-K_{-\theta}))\\ &=& \delta_{ij}
\frac{K_{\theta}-K_{-\theta}}{v_i-v_i^{-1}}. \end{eqnarray*} The
remaining relations follow from Lemma \ref{maintheorem}. Note that
$\Psi$ is compatible with the comultiplications and antipodes by
Lemma \ref{primitivelement} $(i)$. Hence $\Psi$ is a well-defined
surjective Hopf algebra homomorphism.

For injectivity of $\Psi$ on $U_v^{\pm}(\mathfrak{g}(C))$, as we
argued in the proof of Theorem \ref{compositiontheorem}, it
suffices to show that $\Psi$ is compatible with the skew-Hopf
pairings $\phi$ and $\varphi$ on $U_v(\mfk{g}(C))$ and $\mcd$
respectively. Namely $\phi(a,b)=\varphi(\Psi(a),\Psi(b))$, for all
$a\in U_v^{+}(\mfk{g}(C)), b\in U_v^{-}(\mfk{g}(C))$. Indeed
$\varphi(\Psi(E_i),\Psi(F_j))=\varphi(x_i,
\frac{1}{v_j^{-1}-v_j}y_j) = \frac{\delta_{ij}}{v_i^{-1}-v_i} =
\phi(E_i,F_j)$. Actually the kernel of $\Psi$ is contained in the
Cartan part. It is the ideal generated by \{$K_j-K_\theta:
j=(\theta,p) \in\mcj$\}.
\end{proof}

\subsection{Positive roots and indecomposable objects}

Recall that the fundamental region of a generalized Kac-Moody Lie
algebra $\mfkg$ is given by $\mc{F}=\{0\ne\mu\in\mbn \mci:(\mu,i)\le
0,\ \forall\ i\in \mci^{re},\text{supp}(\mu) \text{ is
connected}\}\backslash \cup_{s\geq 2}s\mci^{im}$.

\begin{Lem} For nonzero $\theta\in\mb{N}\mci$, the space $L_{\theta}^{\pm}$
is zero unless $\theta$ lies in the union of the fundamental
region $\mc{F}_0$ of the generalized Kac-Moody Lie algebra
$\mfkg(C_0) $and $\cup_{s\geq 2}s\mci^{im}$.
\label{fundamentalregion}
\end{Lem}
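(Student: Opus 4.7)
I would prove the contrapositive: if $0 \ne \theta \in \mbn\mci$ is not in $\mcf_0 \cup \bigcup_{s \ge 2} s\mciim$, then $L_\theta^+ = 0$; the conclusion for $L_\theta^-$ follows at once because $\omega$ restricts to an isomorphism $L_\theta^+ \stackrel{\simeq}{\lra} L_\theta^-$. Unwinding the definition of $\mcf_0$, the hypothesis on $\theta$ means that either $\mathrm{supp}(\theta)$ is disconnected, or there exists $i_0 \in \mcire$ with $(\theta,i_0) > 0$. Note moreover that if $\theta \in \mci$ then $\Xi_\theta^+ = \mfkhp_\theta$ by the very definition of $\Xi_\theta^+$, hence $L_\theta^+ = 0$ for free; so one may assume $\theta \notin \mci$ throughout.

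The disconnected-support case I would handle by decomposing $\ma$ along the connected components of the Ext-quiver of simples. If $\mathrm{supp}(\theta) = S_1 \sqcup S_2$ with no edges between $S_1$ and $S_2$, write $\theta = \theta_1 + \theta_2$ accordingly. Between simples on the two sides, all Hom's vanish by Schur's lemma and both $\mathrm{Ext}^1$'s vanish by disconnectedness; heredity of $\ma$ together with a routine induction on composition length then shows that any $M_\alpha$ with $\underline{\dim}\, M_\alpha = \theta$ splits (uniquely) as $M_{\alpha_1} \oplus M_{\alpha_2}$ with dimension vectors $\theta_1,\theta_2$. Consequently $u_\alpha^+$ lies in $\mfkhp_{\theta_1} \cdot \mfkhp_{\theta_2} \subseteq \Xi_\theta^+$, so $\mfkhp_\theta = \Xi_\theta^+$ and hence $L_\theta^+ = 0$.

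For the remaining case, assume $\mathrm{supp}(\theta)$ is connected and $(\theta, i_0) > 0$ for some $i_0 \in \mcire$, and suppose for contradiction $L_\theta^+ \ne 0$. Using the orthonormal basis $\{x_{(\theta,p)}\}$ of $\mfkhp$ constructed in the proof of Lemma \ref{primitivelement}, the index $j := (\theta, 1)$ lies in $\mcj \subset \mci'$ and yields a nonzero $x_j \in L_\theta^+$. Because $\theta \notin \mci$ we have $j \ne i_0$ in $\mci'$, so Proposition \ref{bilinearform}(i) forces $(i_0,j)' \le 0$; on the other hand, by construction of the extended form, $(i_0,j)' = (i_0, \varpi(j)) = (i_0, \theta) = (\theta, i_0) > 0$, a contradiction. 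Hence $L_\theta^+ = 0$, completing the argument.

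The main obstacle is the categorical splitting in the disconnected-support case: one has to verify that a general $M_\alpha \in \ma$ whose support meets both components really decomposes as a direct sum, which requires the vanishing of $\mathrm{Ext}^1$ between arbitrary (not just simple) objects supported in the two components and thus leans on heredity together with the induction on length. By contrast, once Proposition \ref{bilinearform}(i) is available, the positivity step is essentially immediate: it is just the standard fact that off-diagonal entries of a Borcherds--Cartan bilinear form are non-positive, applied to the primitive element furnished by the assumption $L_\theta^+ \ne 0$.
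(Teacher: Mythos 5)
Your proof is correct and follows essentially the same route as the paper's: both arguments rest on Proposition \ref{bilinearform}(i) to force $(\theta,i)\le 0$ for all real simple $i$ when $L_\theta^+\ne 0$, and on the splitting $M=M_1\oplus M_2$ (via vanishing of Hom and $\mathrm{Ext}^1$ across disconnected components) to show $\Xi_\theta^+=\mfk{h}^+(\ma)_\theta$ when the support is disconnected. The only difference is that you phrase it as a contrapositive and spell out a bit more detail in the splitting step; the substance is identical.
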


\begin{proof} Take any nonzero $\theta\in\mbn \mci$ such that the space
$L_{\theta}^+$ is nonzero. Then $\theta$ does not belong to
$\mci$, since $\Xi_i^+=\mfkhp_i$ for $i\in \mci$ and hence
$L_i=0$. Now by Lemma \ref{bilinearform} $(i)$ for any $i\in \mci$
the bilinear form $(\theta,i)\le 0$.

We assume the support of such a $\theta $ is not connected and write
$\text{supp}(\theta)=X_1\cup X_2$, where $X_1$ and $X_2$ are
disjoint nonempty subsets of $\mci$ and the bilinear form $(i,j)=0$
for all $i\in X_1$ and $j\in X_2$. This means that objects of $\ma$
with dimension vector supported in $X_1$ and in $X_2$ have no
non-trivial extensions with each other. Therefore any object $M$
with dimension vectors $\theta$ can be decomposed into $M=M_1\oplus
M_2$, such that the dimension vector of $M_i$ is supported in $X_i$,
and $M_1$ and $M_2$ has no nontrivial extensions. This implies that
the multiplication of the elements corresponding to $M_1$ and $M_2$
in the Ringel-Hall algebra $\mfkhp$ gives rise to a unique term
corresponding to $M$. We have thus deduced
$\Xi_{\theta}^+=\mfkhp_{\theta}$, and hence $L_{\theta}^+=0$, which
is a contradiction to the choice of $\theta$.
\end{proof}

To the Borcherds-Cartan matrix $C$ indexed by $\mci'$ obtained in
the last subsection, we associate the root system
$\Delta=\Delta(C)$, the simple reflections
$\tilde{r}_i:\mbc\mci'\lra\mbc\mci'$ for $i\in \mci'^{re}$, and
the Weyl group $W=W(C)$ of the generalized Kac-Moody Lie algebra
$\mfk{g}(C)$ as defined in Section 2. To the Borcherds-Cartan
submatrix $C_0$ indexed by $\mci$, we associate the root system
$\Delta_0=\Delta(C_0)$, the simple reflection $r_i:\mbc
\mci\lra\mbc \mci$ for $i\in \mci$ and the Weyl group
$W_0=W(C_0)$. It is clear that $\Delta_0$ is a subsystem of
$\Delta$, and $\mci'^{re}=\mci^{re}$.

\begin{Lem} $(i)$ For any $i\in \mci^{re}$,
$\varpi\circ\tilde{r}_i=r_i\circ\varpi:\mbc\mci'\lra\mbc \mci$.

$(ii)$ There exists a group isomorphism $W\lra W_0$ by sending
$\tilde{r}_i$ to $r_i$, for $i\in \mci^{re}$.

$(iii)$ The linear map $\varpi:\mbz\mci'\lra\mbz \mci$ sends the
root system $\Delta$ to the union of the root system $\Delta_0$
and $W_0(\cup_{s\geq 2}s\mci^{im})$. \label{previous}
\end{Lem}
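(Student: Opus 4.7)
The statement breaks into three parts: (i) and (ii) are formal consequences of the definitions, while (iii) is where Lemma \ref{fundamentalregion} enters.

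For (i), I would compute directly. Since $i\in\mci^{re}\subset\mci$, we have $\varpi(i)=i$, so $(i,j)'=(i,\varpi(j))$ and $(i,i)'=(i,i)>0$. Thus $c_{ij}'=2(i,\varpi(j))/(i,i)=c_{i,\varpi(j)}$, and applying $\varpi$ to $\tilde{r}_i(j)=j-c_{ij}'i$ produces $\varpi(j)-c_{i,\varpi(j)}i=r_i(\varpi(j))$.

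For (ii), the same computation gives $c_{ij}'=c_{ij}$ whenever $i,j\in\mci^{re}$; that is, the $\mci^{re}$-submatrix of $C$ coincides with $C_0$ restricted to $\mci^{re}$. Hence $W$ and $W_0$ admit identical Coxeter presentations, and $\tilde{r}_i\mapsto r_i$ extends to a group isomorphism whose intertwining with $\varpi$ is precisely (i).

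For (iii), I split $\Delta=\Delta^{re}\cup\Delta^{im}$. The real part is immediate from (i) and (ii): $\varpi(\Delta^{re})=\varpi(W(\mci^{re}))=W_0(\mci^{re})=\Delta_0^{re}$. For the imaginary part, let $\mc{F}$ denote the fundamental region of $\mfkg(C)$, so $\Delta^{im}=W(\mc{F}\cup-\mc{F})$; by $\varpi$-equivariance and the $W\cong W_0$ of (ii), the task reduces to identifying $\varpi(\mc{F})$. I claim $\varpi(\mc{F})=\mc{F}_0\cup\bigcup_{s\ge 2}s\mci^{im}$. The reverse inclusion is straightforward: $\mc{F}_0\subset\mc{F}$ under $\mbn\mci\hookrightarrow\mbn\mci'$, since $(-,-)'$ restricts to $(-,-)$ on $\mci$ and no element of $\mc{F}_0$ is a positive multiple of a basis vector lying in $\mcj$; and any $k\in\mcj$ with $\varpi(k)\in\bigcup_{s\ge 2}s\mci^{im}$ is already a single basis element of $\mci'^{im}\subset\mc{F}$. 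For the forward inclusion, take $\mu\in\mc{F}$: the inequality $(\varpi(\mu),i)=(\mu,i)'\le 0$ for $i\in\mci^{re}$ is automatic, so assuming $\varpi(\mu)\notin\bigcup_{s\ge 2}s\mci^{im}$, what remains is to verify connectedness of $\mathrm{supp}(\varpi(\mu))$ in $\mci$.

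This last point is where I expect the main obstacle. The tool is Lemma \ref{fundamentalregion}: each $k\in\mcj\cap\mathrm{supp}(\mu)$ satisfies $\varpi(k)\in\mc{F}_0\cup\bigcup_{s\ge 2}s\mci^{im}$, so $\mathrm{supp}(\varpi(k))$ is itself connected in $\mci$. Combined with the identity $(k,k')'=(\varpi(k),\varpi(k'))$, which converts $\mci'$-adjacency of $k,k'\in\mathrm{supp}(\mu)$ into overlap or $\mci$-adjacency of $\mathrm{supp}(\varpi(k))$ and $\mathrm{supp}(\varpi(k'))$, a spanning-tree induction on the connected $\mci'$-support graph of $\mu$ propagates connectedness to $\mathrm{supp}(\varpi(\mu))$. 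Assembling the pieces yields $\varpi(\Delta)=\Delta_0\cup W_0(\bigcup_{s\ge 2}s\mci^{im})$.
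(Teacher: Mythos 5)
Your proposal is correct and follows the same route as the paper: parts (i) and (ii) by the identical computation with the Cartan integers $c_{ij}'=\frac{2(\varpi(i),\varpi(j))}{(\varpi(i),\varpi(i))}$ and the resulting identification of Coxeter presentations, and part (iii) by using the $W\cong W_0$ equivariance to reduce everything to identifying $\varpi(\mc{F})$ with $\mc{F}_0\cup\bigcup_{s\ge 2}s\mci^{im}$ via Lemma \ref{fundamentalregion}. The only difference is that you make explicit the connectedness of $\mathrm{supp}(\varpi(\mu))$ (your spanning-tree induction), a point the paper's one-line justification passes over in silence, so your write-up is, if anything, more complete than the original.
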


\begin{proof} $(i)$ It suffices to check that
$\varpi\circ\tilde{r}_i(j)=r_i\circ\varpi(j)$ for any $i\in
\mci^{re}$ and $j\in\mci'$ by the linearity of $\varpi$ and the
reflections. Recall that if we write $C=(c_{ij})$ then
$c_{ij}=\frac{2(i,j)'}{(i,i)'}=\frac{2(\varpi(i),\varpi(j)}{(\varpi(i),\varpi(i))}$
for all $i,j\in\mci'$. Hence,
\begin{eqnarray*}
\varpi\circ\tilde{r}_i(j)&=&\varpi(j-c_{ij}i)=\varpi(j)-c_{ij}i,\\
r_i\circ\varpi(j)&=&\varpi(j)-\frac{2(i,\text{deg}f_j)}{(i,i)}i=\varpi(j)-c_{ij}i.\end{eqnarray*}

$(ii)$ Because the matrix $C_0$ is a submatrix of $C$ and
$\mci'^{re}=\mci^{re}$, the simple reflections $r_i$ satisfy the
same relations as $\tilde{r}_i$. Indeed, the relations are
$r_i^2=1$ and $(r_ir_j)^{m_{ij}}=1$ where $m_{ij}=2,3,4,6$ or
$\infty$ depending on $\frac{4(i,j)^2}{(i,i)(j,j)}=0,1,2,3$ or $>
3$ respectively (see \cite{B}).

$(iii)$ It is clear that $\varpi$ identifies $\mci'^{re}$ with
$\mci^{re}$. By Lemma \ref{fundamentalregion}, the map $\varpi$
sends the fundamental region of the Lie algebra $\mfkg(C)$ to the
fundamental region of $\mfkg(C_0)$ and $\cup_{s\geq 2}s\mci^{im}$.
Since the root system is generated by the simple real roots and
the fundamental region under the Weyl group reflections, the
statement follows from $(i)$ and $(ii)$.
\end{proof}

Although we cannot classify the indecomposable objects in the
category $\ma$, we give the following correspondence between their
dimension vectors and the positive roots of the generalized
Kac-Moody Lie algebra $\mfkg(C_0)$.

\begin{Thm} Let $\Phi^+$ be the set of dimension vectors of
indecomposable objects in $\ma$. Then $\Phi^+=\Delta_0^+\cup
W_0(\cup_{s\geq 2}s\mci^{im})$ as subsets of $\mbn\mci$, and for
any real root $\alpha\in\Delta_0^+$ there exists a unique, up to
isomorphism, indecomposable object with dimension $\alpha$.
\label{maintheoremtwo}
\end{Thm}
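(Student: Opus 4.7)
The strategy is to compute $\dim_{\mbc}\mfkhm_{-\theta} = |\mcp_{\theta}|$ in two different ways and match them. On the Ringel--Hall side, Krull--Schmidt in the length category $\ma$ says every object decomposes uniquely into indecomposables, so if $I(\alpha)$ denotes the number of isomorphism classes of indecomposables with dimension vector $\alpha$, then
$$\sum_{\theta \in \mbn\mci} |\mcp_{\theta}|\, e(-\theta) \;=\; \prod_{\alpha \in \Phi^+}\bigl(1-e(-\alpha)\bigr)^{-I(\alpha)}.$$
On the quantum group side, Theorem \ref{doubletheorem} provides a graded algebra isomorphism $\Psi:U_v^-(\mfkg(C))\stackrel{\simeq}{\lra}\mfkhm$ whose $\mbn\mci'$-grading is pushed forward to the $\mbn\mci$-grading on $\mfkhm$ via $\varpi$, so Proposition \ref{formalcharacter} yields
$$\sum_{\theta \in \mbn\mci} |\mcp_{\theta}|\, e(-\theta) \;=\; \prod_{\beta \in \Delta^+(C)}\bigl(1-e(-\varpi(\beta))\bigr)^{-{\rm mult}_{\mfkg(C)}(\beta)}.$$
Grouping factors on the right by the common value of $\varpi(\beta)$ and invoking uniqueness of infinite-product expansions of the form $\prod_{\alpha}(1-e(-\alpha))^{-m_{\alpha}}$ in the completed graded ring $\mbc[[e(-\theta):\theta \in \mbn\mci\setminus\{0\}]]$ (justified by an induction on the height $\sum_i\theta_i$), I extract
$$I(\alpha) \;=\; \sum_{\substack{\beta\in\Delta^+(C)\\ \varpi(\beta)=\alpha}}{\rm mult}_{\mfkg(C)}(\beta)\qquad\text{for every }\alpha\in\mbn\mci.$$

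The set equality $\Phi^+=\Delta_0^+\cup W_0(\cup_{s\geq 2}s\mci^{im})$ is then immediate from Lemma \ref{previous}(iii): $\alpha\in\Phi^+$ iff $I(\alpha)>0$ iff $\alpha\in\varpi(\Delta^+(C))$.

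For the uniqueness statement on real roots, I fix a real $\alpha\in\Delta_0^+$ and show that $\varpi^{-1}(\alpha)\cap\Delta^+(C)=\{\beta\}$ for a unique real root $\beta$; since ${\rm mult}_{\mfkg(C)}(\beta)=1$ for real roots, the formula above then collapses to $I(\alpha)=1$. Two ingredients are needed. First, every real root of $C$ lies in $\mbz\mci^{re}$: starting from a simple real root in $\mci'^{re}=\mci^{re}$ and applying reflections $\tilde r_j$ with $j\in\mci^{re}$ never introduces components in $\mcj$; since $\varpi$ acts as the identity on $\mbz\mci^{re}$ and Lemma \ref{previous}(ii) identifies $W$ with $W_0$, this forces $\varpi$ to restrict to a bijection $\Delta^{re}(C)\to\Delta_0^{re}$. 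Second, no imaginary root of $C$ can map to a real root of $C_0$: by the compatibility $(\varpi(\mu),\varpi(\nu))=(\mu,\nu)'$ built into the definition of $(-,-)'$, a real root $\alpha$ satisfies $(\alpha,\alpha)>0$, whereas any imaginary root $\beta=\tilde w(\mu)$ of $C$ satisfies $(\beta,\beta)'=(\mu,\mu)'\leq 0$, because for $\mu\in\mcf$ one has $(\mu,k)'\leq 0$ at every $k\in{\rm supp}(\mu)$ (using the fundamental region condition when $k\in\mci'^{re}$ and the Borcherds--Cartan sign inequalities when $k\in\mci'^{im}$). The resulting numerical contradiction excludes imaginary preimages.

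The main obstacle is the extraction of $I(\alpha)={\rm mult}$-sum from the equality of generating functions: one is working with infinite products indexed by potentially infinite sets and must justify term-by-term comparison in the completed graded ring via a careful induction on the height. A secondary subtlety is the non-positivity of $(\mu,\mu)'$ for $\mu$ in the fundamental region, which requires invoking both the fundamental-region inequalities (at real simple indices) and the off-diagonal Borcherds--Cartan sign conditions (at imaginary simple indices) to conclude $(\mu,k)'\leq 0$ for all $k\in{\rm supp}(\mu)$.
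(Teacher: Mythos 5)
Your generating-function argument is exactly the paper's: equate $\mathrm{ch}\,\mfkhm=\prod_{\alpha\in\Phi^+}(1-e(-\alpha))^{-I(\alpha)}$ with $\prod_{\beta\in\Delta^+(C)}(1-e(-\varpi(\beta)))^{-\mathrm{mult}\,\beta}$ via Theorem \ref{doubletheorem} and Proposition \ref{formalcharacter}, extract $I(\alpha)=\sum_{\varpi(\beta)=\alpha}\mathrm{mult}_{\mfkg(C)}\beta$, and read off $\Phi^+=\varpi(\Delta^+(C))$. Your treatment of the real-root case (real roots of $C$ live in $\mbz\mci^{re}$ where $\varpi$ is the identity, and no imaginary root can hit a real class because $(\varpi(\beta),\varpi(\beta))=(\beta,\beta)'\le 0$) is sound and in fact fills in a step the paper leaves implicit.

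However, there is a genuine gap in your claim that the set equality $\Phi^+=\Delta_0^+\cup W_0(\cup_{s\ge 2}s\mci^{im})$ is ``immediate from Lemma \ref{previous}(iii).'' What that lemma actually delivers (its proof rests on Lemma \ref{fundamentalregion}, which is a \emph{vanishing} statement: $L_\theta^\pm=0$ unless $\theta\in\mc{F}_0\cup\bigcup_{s\ge2}s\mci^{im}$) is the inclusion $\varpi(\Delta^+(C))\subseteq\Delta_0^+\cup W_0(\cup_{s\ge2}s\mci^{im})$, together with $\Delta_0^+\subseteq\varpi(\Delta^+(C))$ since $C_0$ is a submatrix of $C$. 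It does \emph{not} give the reverse containment $W_0(\cup_{s\ge2}s\mci^{im})\subseteq\varpi(\Delta^+(C))$: note that $si$ for $s\ge 2$ and $i\in\mci^{im}$ is excluded from the fundamental region of $\mfkg(C)$ by definition and is not itself a root of $\mfkg(C)$, so $si\in\varpi(\Delta^+(C))$ only if some genuinely new simple root $(\theta,p)\in\mcj$ sits over a multiple of $i$, i.e.\ only if $L_{si}^+\neq 0$ for some $s$ --- and that is precisely the nontrivial existence statement to be proved, not a formal consequence. The paper closes this by an explicit argument split into two cases: when $(i,i)<0$ it shows $L_{2i}^\pm\neq 0$, so $(2i,1)\in\mcj$ and $ni+(2i,1)$ lies in the fundamental region of $\mfkg(C)$ with $\varpi(ni+(2i,1))=(n+2)i$; when $(i,i)=0$ it constructs by induction a uniserial indecomposable $L_n$ of length and Loewy length $n$ with all composition factors $S_i$ and $\mathrm{Hom}(S_i,L_n)\cong\mathrm{Ext}^1(S_i,L_n)\cong\mathrm{End}(S_i)$, giving $ni\in\Phi^+$ directly. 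Your proposal contains no substitute for this construction, so the inclusion $W_0(\cup_{s\ge2}s\mci^{im})\subseteq\Phi^+$ is unproven as written.
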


\begin{proof} For $\alpha\in\Phi^+$, write $I(\alpha,q)$ for the number
of indecomposable objects in $\ma$ over $\mb{F}_q$ with dimension
vector $\alpha$. The formal character of $\mfkhm$ equals
$$\text{ch}\mfkhm =\sum_{\mu\in\mbn \mci}\text{dim}_{\mbc}\mfkhm_{-\mu}e(-\mu)
=\prod_{\alpha\in\Phi^+}(1-e(-\alpha))^{-I(\alpha,q)}.$$ By
Corollary \ref{maintheorem} and Proposition \ref{formalcharacter},
we have that
\begin{eqnarray*}
\text{ch}\mfkhm&=&\varpi(\text{ch}U_v^-(\mfk{g}(C))=\varpi(\Pi_{\beta\in\Delta^+}(1-e(-\beta))^{-\text{mult}_{\mfk{g}(C)}\beta})\\
&=&\Pi_{\beta\in\Delta^+}(1-e(-\varpi(\beta)))^{-\text{mult}_{\mfk{g}(C)}\beta}.\end{eqnarray*}
Hence
$$\Phi^+=\{\varpi(\beta):\beta\in\Delta^+\}$$ and
for any $\alpha$ in $\Phi^+$,
$$I(\alpha,q)=\sum_{\beta\in\Delta^+,\varpi(\beta)=\alpha}\text{mult}_{\mfk{g}(C)}\beta.$$
If $\alpha$ is a real root, then
$I(\alpha,q)=\text{mult}_{\mfk{g}(C)}\alpha=1$.

By Lemma \ref{previous} (iii), it remains to prove that for any
$s\geq 2$ and $i\in\mci^{im}$, $si\in \Phi^+$. In the case
$(i,i)<0$, it is clear that $L_{2i}^{\pm}\neq 0$, so there exists
$(2i,1)\in \mathcal {J}$. It follows that $ni+(2i,1)$ is in the
fundamental region of the Lie algebra $\mfkg(C)$. Therefore its
image under $\varpi$, i.e. $(n+2)i$, lies in $\Phi^+$ (for any
$n\geq 0$). In the case $(i,i)=0$, let $k'$ denotes the field ${\rm
End}(S,S)$. We claim that there exists an indecomposable object
$L_{n}$ which has both the length and the Loewy length $n$, each
composition factor is $S$, and ${\text{
Hom}}(S,L_{n})=k'={\text{Ext}}^1(S,L_{n})$. This would finish the
proof of the Theorem.

We prove the claim by induction. Without lose the generality, we can
assume $k'=k$. When $n=1$, $L_1=S$ is the only choice. When $n=2$, .
Because ${\rm Ext}^1(S,S)=k'$, there is uniquely a non-split short
exact sequence $0\longrightarrow S \longrightarrow
L_2\longrightarrow S\longrightarrow 0$ with $L_2$ indecomposable.
Moreover we apply ${\text{Hom}}(S,-)$ to the sequence and find that
${\rm Hom}(S,L_2)=k'={\rm Ext^1}(S,L_2)$. Assume now there exists
such a unique $L_{n-1}$ satisfying that ${\rm
Hom}(S,L_{n-1})=k'={\rm Ext}^1(S,L_{n-1})$. Then there exists
uniquely a non-split sequence $0\longrightarrow
L_{n-1}\longrightarrow E\longrightarrow S\longrightarrow 0$. Note
that the Loewy length of $E$ must be $n-1\leq l.length(E)\leq n$. If
$l.length(E)=n-1$, then $E=L_{n-1}\oplus S$, namely the sequence
$0\longrightarrow L_{n-1}\longrightarrow E\longrightarrow
S\longrightarrow 0$ splits, that is a contradiction. Hence
$l.length(E)=n$. Since the length of $E$ is also $n$, $E$ must be
indecomposable and we define $L_n=E$. Apply ${\rm Hom}(S,-)$ to the
sequence and find that ${\rm Hom}(S,L_{n})=k'={\rm Ext}^1(S,L_{n})$.
\end{proof}

\section{Coherent sheaves on a weighted projective curve}

In this section we apply our main results to a special case, that
is, the category of coherent sheaves over a weight projective
curve. To be more precise, we fix a slope consider semistable
sheaves of the fixed slope so that we get a hereditary abelian
finitary length category. Our method will give the classification
of the dimension vectors of the indecomposable objects in this
category.

\subsection{General features}

We introduce the category of coherent sheaves on a weighted
projective curve axiomatically, following \cite{LR}. All results
in this subsection can be found in \cite{GL} and \cite{LR}.

Recall that our base field $k$ is a finite field. By a {\em category
of coherent sheaves on a weighted projective curve}, we mean a
category $\mathscr{H}$ satisfying the following axioms $(H1)-(H6)$
(and objects in $\mathscr{H}$ are {\em coherent sheaves}):

$(H1)$ $\mathscr{H}$ is an abelian, $k$-linear category.

$(H2)$ $\mathscr{H}$ is small and $\rm{Hom}$-finite.

$(H3)$ $\mathscr{H}$ admits a self-equivalence $\tau$ satisfying
Serre duality, i.e. ${\rm DExt}^1(X,Y)={\rm Hom}(Y,\tau X)$.

$(H4)$ $\mathscr{H}$ is Noetherian, and not each object having
finite length.\\
It follows from $(H3)$ that the category $\mathscr{H}$ is
hereditary. Assume $\mathscr{H}$ satisfies $(H1)-(H4)$, then each
$X\in\mathscr{H}$ has the form $X=X_0\oplus X_+$, where $X_0\in
\mathscr{H}_0=\{X\in \mathscr{H}\mid X\ \text{has finite
length}\}$ and $X_+\in \mathscr{H}_+=\{X\in \mathscr{H}\mid X\
\text{has no simple subobjects}\}.$ Sheaves in $\mathscr{H}_+$ are
called {\em bundles}. The full subcategory $\mathscr{H}_0$ splits
into a direct sum of tubes $\mathscr{H}_0=\bigsqcup_{X \in
C}\mathscr{S}_X$, where $\mathscr{S}_X=\mathbb{Z}A_\infty/\tau^n$
for some $n\geq0$ and $C$ is the index set. The simple objects $S$
in $\mathscr{H}_0$ are of two types: $(i)$ $\tau S\cong S$ (called
{\em ordinary}); $(ii)$ $\tau S\ncong S$ (called {\em
exceptional}).

$(H5)$ There exists an additive function ${\rm rk}:\
\mathscr{H}\rightarrow \mathbb{Z}_{\geq0}$, called the {\em rank},
such that for any object $X$ the following holds,

\ \ \ \ \ (i) ${\rm rk}(X)=0 \Leftrightarrow X\in \mathscr{H}_0$

\ \ \ \ \ (ii) ${\rm rk}(\tau X)={\rm rk}(X)$

\ \ \ \ \ (iii) $\exists X \in \mathcal {H}_+$, such that
${\rm rk}(X)=1$.\\
Bundles of rack one are called {\em line bundles}.

$(H6)$ $\mathscr{H}_0$ has only finitely many exceptional simples.
Moreover, for each tube $\mathscr{S}_X$ and any line bundle $L$,
we have $\sum_{S:\ \text{simples in} \mathscr{S}_X} {\rm
dim}_k{\rm Hom}(L,S)=1$.

\begin{Rem} The
index set $C$ has the structure of a smooth projective curve. Let
$p:\ C\rightarrow \mathbb{N}$ be defined by $p(X)=\# \{simples\ in\
\mathscr{S}_X\}$. Then $(C, p)$ is the weighted projective curve
associated to $\mathscr{H}$. 
\end{Rem}

By definition the function $p$ takes value $1$ at ordinary points.
We collect the values at those exceptional points into a set
$(p_1, p_2,\cdots,p_t)$, called the {\em weight sequence}. The
{\em average Euler form} is the bilinear form on the Grothendieck
group $K_0(\mathscr{H})$ defined by
$$
\langle\langle
[X],[Y]\rangle\rangle=\frac{1}{p}\sum_{j=0}^{p-1}\langle[\tau^jX],[Y]\rangle
$$
where $p=l.c.m(p_1,p_2,\cdots, p_t)$ and
$\langle[X],[Y]\rangle=\text{dim}_k\text{Hom}(X,Y)-\text{dim}_k
\text{Ext}^1(X,Y)$ is the ordinary Euler form.

Fix a line bundle $L_0$. We define the {\em degree} to be the
additive function ${\rm deg}:\
K_0(\mathscr{H})\rightarrow\frac{1}{p}\mathbb{Z}$ via $${\rm
deg}([X])=\langle\langle [L_0],[X]\rangle\rangle-\langle\langle
[L_0],[L_0]\rangle\rangle {\rm rk}(X).$$ For a non-zero object
$X\in\mathscr{H}$, define the {\em slope} of $X$ to be
$\mu_X$:=$\frac{{\rm deg}(X)}{{\rm rk}(X)}$. We call the rational
number $g_\mathscr{H}=1-\langle\langle [L_0],[L_0]\rangle\rangle$
the {\em orbifold genus} of $\mathscr{H}$, and $\mathcal
{X}_\mathscr{H}=2(1-g_\mathscr{H})$ the {\em orbifold Euler characteristic} of $\mathscr{H}$.

Write $g_0$ for the ordinary genus of the underlying smooth
projective curve. So $g_0=0$ means the projective line, and
$g_0=1$ means the elliptic curve.

\begin{Rem} The orbifold Euler characteristic $\mathcal {X}_\mathscr{H}$ can be used to classify the
category $\mathscr{H}$. If $\mathcal {X}_\mathscr{H}>0$ (domestic
case), all possible cases are as follows:

$(i)$ $g_0=0,\ no\ weights$ \ \ \ \        $(ii)$ $g_0=0,$\
weights (p)

$(iii)$ $g_0=0,$\ weights (p,q)\ \ \ \       $(iv)$ $g_0=0,$\
weights (2,2,n)

$(v)$ $g_0=0,$\ weights (2,3,3)\ \ \ \      $(vi)$ $g_0=0,$\
weights (2,3,4)

$(vii)$ $g_0=0,$\ weights (2,3,5).

If $\mathcal {X}_\mathscr{H}=0$ (tubular case), all the possible
cases are:

$(i)$ $g_0=0,$\ weights (3,3,3)\ \ \ \         $(ii)$ $g_0=0,$\
weights (2,4,4)

$(iii)$ $g_0=0,$\ weights (2,3,6)\ \ \ \       $(iv)$ $g_0=0,$\
weights (2,2,2,2)

$(v)$ $g_0=1,\ no\ weights$.

All the other cases are wild and $\mathcal {X}_\mathscr{H}<0$.
\end{Rem}

A non-zero bundle $X\in \mathscr{H}_+$ is called {\em stable}
(respectively, {\em semi-stable}) of slope $\rho\in \mathbb{Q}$ if
$(i)$ $\mu X=\rho$; $(ii)$ if $X'\varsubsetneq X,\ \mu X'<\mu X\
(\text{respectively}, \mu X'\leq\mu X).$ By \cite{GL} and
\cite{LR}, if the orbifold Euler characteristic
$\mc{X}_{\mathscr{H}}\geq0$, each indecomposable bundle is
semi-stable. Moreover if $\mc{X}_{\mathscr{H}}>0$, each
indecomposable bundle is stable.

\begin{Prop}(Riemann-Roch)
For each $X,Y \in K_0(\mathscr{H})$, we have
\[\begin{matrix}
\langle \langle X,Y\rangle\rangle=(1-g_\mathscr{H}){\rm rk}(X){\rm
rk}(Y)+\begin{vmatrix} {\rm rk}(X) & {\rm rk}(Y)\\{\rm deg}(X) &
{\rm deg}(Y)
\end{vmatrix}.
\end{matrix}\]
\end{Prop}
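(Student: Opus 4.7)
Both sides of the identity define $\mathbb{Q}$-bilinear forms on $K_0(\mathscr{H})$: the ordinary Euler form is additive on short exact sequences, so its average over $\tau$-orbits inherits bilinearity, and the right-hand side is assembled from the additive functions $\text{rk}$ and $\deg$. Since every coherent sheaf splits as bundle plus torsion by $(H4)$, and every line bundle differs from $L_0$ in $K_0(\mathscr{H})$ by a class supported on a divisor (hence by classes of simple torsion sheaves), the group $K_0(\mathscr{H})$ is generated by $[L_0]$ together with the classes $[S]$ of simple torsion sheaves $S \in \mathscr{H}_0$. It therefore suffices to verify the identity on the four pairs $(L_0, L_0)$, $(L_0, S)$, $(S, L_0)$, and $(S, S')$.

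The first two pairs follow tautologically from the definitions immediately preceding the proposition. Using $\text{rk}(L_0) = 1$ in the degree formula gives $\deg L_0 = 0$, so for $(L_0, L_0)$ both sides collapse to $1 - g_\mathscr{H}$; for $(L_0, S)$ with $\text{rk}(S) = 0$, both sides collapse to $\deg S$, which matches $\langle\langle L_0, S\rangle\rangle$ by the very definition of degree. For the pair $(S, L_0)$, the right-hand side evaluates to $-\deg S$. Because a torsion sheaf admits no nonzero map into a bundle, $\text{Hom}(\tau^j S, L_0) = 0$; Serre duality $(H3)$ then gives $\langle \tau^j S, L_0\rangle = -\dim\text{Hom}(L_0, \tau^{j+1} S)$, and a reindexing of the sum — legitimate because $p = \text{lcm}(p_1,\ldots,p_t)$ forces $\tau^p$ to fix every torsion sheaf up to isomorphism — yields $\langle\langle S, L_0\rangle\rangle = -\langle\langle L_0, S\rangle\rangle = -\deg S$, as required.

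The decisive case is $(S, S')$ with both sheaves simple torsion, where the right-hand side vanishes. If $S$ and $S'$ lie in different tubes the average vanishes trivially, since all Homs and Exts between them are zero. Within a single tube $\mathscr{S}_X$ of rank $p_X$, label the simples $S_0, \ldots, S_{p_X - 1}$ cyclically permuted by $\tau$, say $\tau S_a = S_{a-1}$. A direct application of Serre duality gives $\langle S_a, S_b\rangle = \dim_k\text{End}(S_a)\,\delta_{a,b} - \dim_k\text{End}(S_{a-1})\,\delta_{b, a-1}$, and because $\tau$ is an equivalence all the endomorphism dimensions along a given tube coincide. Averaging over $j = 0, \ldots, p-1$, which sweeps the tube orbit of $\tau$ exactly $p/p_X$ times, the diagonal and subdiagonal contributions cancel term by term, so $\langle\langle S, S'\rangle\rangle = 0$. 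This tube-level cancellation is the main technical obstacle; once it is in hand, bilinearity extends the four case checks to the full identity.
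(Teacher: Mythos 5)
The paper does not actually prove this proposition: it appears in the ``General features'' subsection, all of whose results are imported from \cite{GL} and \cite{LR}, so there is no in-paper argument to compare against. Your proof is essentially the standard one from those sources --- reduce by bilinearity of both sides to a set of generators of $K_0(\mathscr{H})$, and check the four cases --- and all four case computations are correct; in particular the tube computation $\langle S_a,S_b\rangle=d(\delta_{a,b}-\delta_{b,a-1})$ with $d=\dim_k{\rm End}(S_a)$ constant along the tube, summed $p/p_X$ times around, is exactly the right cancellation. Two points deserve more care than you give them. First, the generation claim: that $K_0(\mathscr{H})$ is generated by $[L_0]$ together with the classes of simple torsion sheaves does not follow from $(H4)$ alone. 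It needs (i) that every bundle admits a filtration with line-bundle factors, and (ii) that for any line bundle $L$ the class $[L]-[L_0]$ lies in the subgroup generated by simple torsion classes; the latter is obtained by repeatedly forming $L''=\ker(L\twoheadrightarrow S)$ for simple torsion quotients $S$ (using $(H6)$) and knowing that any two line bundles are linked by such monomorphisms with finite-length cokernel. Both are genuine structural facts from \cite{LR}, not tautologies, and should be cited or argued. Second, in the case $(S,L_0)$ the average $\langle\langle L_0,S\rangle\rangle$ is by definition $\frac1p\sum_j\langle\tau^jL_0,S\rangle$, whereas Serre duality hands you $-\frac1p\sum_j\langle L_0,\tau^{j+1}S\rangle$; identifying the two requires shifting $\tau$ across the pairing (legitimate since $\tau$ is an equivalence) and then using $\tau^pS\cong S$ to reindex. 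This works, but your write-up conflates the two averages without comment. With those two steps filled in, the proof is complete and is the intended argument.
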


\begin{Lem}\label{exactsubcat}
Let $\mathscr{A}$ be a hereditary abelian category. For each
$\mathbb{Z}$-linear form $\lambda :K_0(\mathscr{A})
\longrightarrow \mathbb{Z}$, the full subcategory
$\mathscr{A}(\lambda)$ of $\mathscr{A}$ controlled by $\lambda$,
consisting of all objects $X$ from $\mathscr{A}$ with $(i)$
$\lambda(X)=0$ $(ii)$ $\forall X'\subseteq X$,
$\lambda(X')\leq\lambda(X)$, is an exact extension-closed
subcategory of $\mathscr{A}$. In particular
$\mathscr{A}(\lambda)$is again a hereditary abelian category.
\end{Lem}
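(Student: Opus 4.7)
My plan is to verify three assertions in sequence: that $\mathscr{A}(\lambda)$ is closed under extensions, that it is closed under kernels and cokernels (making it a full abelian subcategory on which the inclusion into $\mathscr{A}$ is exact), and finally that it inherits the hereditary property from $\mathscr{A}$. Throughout, condition (i) on the $\lambda$-value will be controlled automatically by additivity of $\lambda$ on short exact sequences, so the real content sits in preserving the subobject bound (ii).

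For extension closure, given $0 \to X \to Y \to Z \to 0$ with $X, Z \in \mathscr{A}(\lambda)$, additivity yields $\lambda(Y) = \lambda(X)+\lambda(Z)=0$. For any subobject $Y' \subseteq Y$ I would form $X' = X \cap Y' \subseteq X$ via pullback and $Z' = Y'/X' \hookrightarrow Z$; additivity then gives $\lambda(Y') = \lambda(X') + \lambda(Z') \leq 0$. For kernels and cokernels of a morphism $f : X \to Y$ in $\mathscr{A}(\lambda)$, I factor $f$ through its image $I$. Both $\ker f \subseteq X$ and $I \subseteq Y$ satisfy $\lambda \leq 0$, and additivity on $0 \to \ker f \to X \to I \to 0$ forces $\lambda(\ker f) + \lambda(I) = 0$, hence both vanish, and then $\lambda(\mathrm{coker}\,f) = \lambda(Y) - \lambda(I) = 0$. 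Subobjects of $\ker f$ are subobjects of $X$ so their $\lambda$ is already $\leq 0$. Subobjects of $\mathrm{coker}\,f = Y/I$ lift along the projection $Y \twoheadrightarrow Y/I$ to subobjects $Y' \subseteq Y$ containing $I$, and since $\lambda(Y'/I) = \lambda(Y') - \lambda(I) = \lambda(Y') \leq 0$ the bound transfers.

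For heredity, it suffices to show every 2-extension in $\mathscr{A}(\lambda)$ is Yoneda-equivalent to the trivial one. Given $0 \to A \to B_1 \to B_0 \to C \to 0$ in $\mathscr{A}(\lambda)$, I factor through $I = \mathrm{im}(B_1 \to B_0)$ into two short exact sequences in $\mathscr{A}(\lambda)$. Since $\mathrm{Ext}^2_{\mathscr{A}}(C, A) = 0$, the class of $0 \to A \to B_1 \to I \to 0$ in $\mathrm{Ext}^1_{\mathscr{A}}(I, A)$ lies in the image of the pullback map $\mathrm{Ext}^1_{\mathscr{A}}(B_0, A) \to \mathrm{Ext}^1_{\mathscr{A}}(I, A)$, represented by some extension $0 \to A \to E \to B_0 \to 0$ in $\mathscr{A}$. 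Extension closure places $E$ inside $\mathscr{A}(\lambda)$, and the same pullback diagram gives a morphism of 2-extensions witnessing triviality already in $\mathscr{A}(\lambda)$.

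The step I expect to dwell on is the subobject lift for the cokernel, since that is where conditions (i) and (ii) interact most delicately and where the choice of working with ``$\lambda(X')\le\lambda(X)$'' rather than an unconditional ``$\lambda(X')\le 0$'' needs to be reconciled; everything else reduces to additivity of $\lambda$ and the standard Yoneda factoring argument, so extension closure bootstraps the hereditary property essentially for free.
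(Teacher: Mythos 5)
Your argument is correct and, for the closure properties, follows the same route as the paper: additivity of $\lambda$ on short exact sequences together with the observation that condition $(ii)$ reduces to $\lambda(X')\le 0$ forces $\lambda(\mathrm{Ker}\,f)=\lambda(\mathrm{Im}\,f)=0$, with subobjects of the kernel (resp.\ of the cokernel, via the lattice correspondence under $Y\twoheadrightarrow Y/I$) inheriting the bound. The paper writes out only the kernel case and offers no justification for the final hereditary claim, so your treatment of extensions, cokernels, and especially the Yoneda $\mathrm{Ext}^2$ argument for heredity supplies details the paper leaves implicit; all of these are sound, though the last sentence of your heredity paragraph is terse --- strictly one needs the standard zigzag of congruences (using that the pushout of $\xi_2$ along $I\hookrightarrow B_0$ splits), all of whose intermediate objects stay in $\mathscr{A}(\lambda)$ by extension closure.
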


\begin{proof}
We have to show $\mathscr{A}(\lambda)$ is closed under taking
kernels, cokernels and extensions. Let us only prove for kernels.
Take any $X,Y\in\mathscr{A}(\lambda)$, $f:X\longrightarrow Y$, the
short exact sequence $0\longrightarrow \text{Ker}(f)\longrightarrow
X\longrightarrow \text{Im}(f)\longrightarrow 0$ implies that
$\lambda(X)=\lambda(\text{Ker}(f))+\lambda(\text{Im}(f))$. On the
other hand, $\text{Ker}(f)\subseteq X$ and $\text{Im}(f)\subseteq
Y$, so $\lambda(\text{Ker}(f))\leq 0$ and $\lambda(\text{Im}(f))\leq
0$. We have $\lambda(\text{Ker}(f))=0$. Subobjects of
$\text{Ker}(f)$ are subobjects of $X$, therefore $\text{Ker}(f)\in
\mathscr{A}(\lambda)$.
\end{proof}

For each $\rho\in \mathbb{Q}$, let $\mathscr{H}^{(\rho)}$ be the
full subcategory of $\mathscr{H}$ consisting of all semi-stable
bundles of slope $\rho$ (including the $0$ object).

\begin{Prop}\label{semistablesubcat}
$\mathscr{H}^{(\rho)}$ is an exact subcategory of $\mathscr{H}$,
closed under extensions. Furthermore, each $X\in
\mathscr{H}^{(\rho)}$ has finite length in $\mathscr{H}^{(\rho)}$,
and the simple objects in $\mathscr{H}^{(\rho)}$ are the stable
ones.
\end{Prop}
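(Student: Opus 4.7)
The strategy is to realize $\mathscr{H}^{(\rho)}$ as the subcategory $\mathscr{H}(\lambda)$ controlled by an appropriate $\mathbb{Z}$-linear form $\lambda$, so that Lemma \ref{exactsubcat} immediately yields that $\mathscr{H}^{(\rho)}$ is an exact extension-closed hereditary abelian subcategory. Writing $\rho=a/b$ in lowest terms with $b>0$ and $p={\rm lcm}(p_1,\dots,p_t)$, I would set
$$\lambda:K_0(\mathscr{H})\lra\mb{Z},\qquad \lambda(X)=bp\bigl(\deg(X)-\rho\,{\rm rk}(X)\bigr),$$
which is a well-defined integer-valued additive function because $\deg$ takes values in $\tfrac{1}{p}\mb{Z}$ and $b\rho\in\mb{Z}$.

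The key step is the identification $\mathscr{H}(\lambda)=\mathscr{H}^{(\rho)}$. If $X\in\mathscr{H}^{(\rho)}$ is nonzero, then $X$ is a bundle of slope $\rho$ so $\lambda(X)=0$; any subobject $X'\subseteq X$ is again a bundle (a nonzero torsion part would contain a simple subobject of $X$, contradicting $X\in\mathscr{H}_+$), so semi-stability gives $\mu(X')\le\rho$ and hence $\lambda(X')\le 0$. Conversely, if $X\in\mathscr{H}(\lambda)$, the torsion part $X_0$ in the decomposition $X=X_0\oplus X_+$ is a subobject of $X$, so $\lambda(X_0)\le 0$; but $\lambda(X_0)=bp\deg(X_0)$ and $\deg$ is strictly positive on $\mathscr{H}_0\setminus\{0\}$, forcing $X_0=0$. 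Hence $X$ is a bundle, $\lambda(X)=0$ translates into $\mu(X)=\rho$, and the subobject condition becomes exactly semi-stability.

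The remaining two assertions then follow easily from the identification. A nonzero $X\in\mathscr{H}^{(\rho)}$ is simple in $\mathscr{H}^{(\rho)}$ iff it has no proper nonzero subobject inside $\mathscr{H}^{(\rho)}$. Any proper nonzero subobject $X'\subsetneq X$ in $\mathscr{H}$ already satisfies $\mu(X')\le\rho$ by semi-stability of $X$, and in the borderline case $\mu(X')=\rho$ the subobject $X'$ is automatically semi-stable (its own subobjects are subobjects of $X$), hence lies in $\mathscr{H}^{(\rho)}$. So simplicity in $\mathscr{H}^{(\rho)}$ is equivalent to every proper nonzero subobject having strictly smaller slope, i.e.\ to stability. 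For finite length, a strict chain $0=X_0\subsetneq X_1\subsetneq\dots\subsetneq X_n=X$ in $\mathscr{H}^{(\rho)}$ has successive quotients in $\mathscr{H}^{(\rho)}$ by the exactness just established, hence each of slope $\rho$; being nonzero objects of $\mathscr{H}^{(\rho)}$ they are bundles, so each has ${\rm rk}\ge 1$, forcing $n\le{\rm rk}(X)$.

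The main obstacle is the positivity statement $\deg(X)>0$ for $0\ne X\in\mathscr{H}_0$, which drives both the identification and the rank bound above. To establish it, I would reduce (by additivity of $\deg$ on composition series in $\mathscr{H}_0$) to checking $\deg(S)>0$ on each simple $S\in\mathscr{H}_0$; here Serre duality $(H3)$ kills the $\text{Ext}^1$ contributions in the average Euler form that defines $\deg$, after which the normalization $(H6)$ and a case split on ordinary versus exceptional simples (using that $\tau$ permutes the simples within each tube) produces a strictly positive value. Everything else is formal bookkeeping with slopes, ranks, and Lemma \ref{exactsubcat}.
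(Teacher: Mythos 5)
Your proposal is correct and follows essentially the same route as the paper: both realize $\mathscr{H}^{(\rho)}$ as the subcategory controlled by the linear form $\lambda$ built from $\mathrm{deg}-\rho\,\mathrm{rk}$ (you rescale by $bp$ to guarantee integrality, the paper uses $r\,\mathrm{deg}-d\,\mathrm{rk}$) and then invoke Lemma \ref{exactsubcat}, after which finite length is obtained from the rank bound and simplicity is matched with stability. The only difference is one of care rather than method: you actually verify the identification $\mathscr{H}(\lambda)=\mathscr{H}^{(\rho)}$, in particular the strict positivity of $\mathrm{deg}$ on nonzero torsion sheaves needed to exclude them, a point the paper passes over with ``by definition''.
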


\begin{proof}
Write $\rho=\frac{d}{r}$ with $d$ and $r$ coprime to each other.
Consider the linear form $\lambda=r{\rm deg}-d {\rm rk}:
K_0(\mathscr{H})\longrightarrow \mathbb{Z}$. By definition, the
subcategory $\mathscr{H}(\lambda)$ of $\mathscr{H}$ controlled by
$\lambda$ is just $\mathscr{H}^{(\rho)}$. Hence it follows from
Lemma \ref{exactsubcat} that $\mathscr{H}^{(\rho)}$ is an exact
subcategory of $\mathscr{H}$, closed under extensions.

It is clear that the stable objects are exactly the simple ones in
the $\mathscr{H}^{(\rho)}$. For each semi-stable $X$, if it is not
stable, there exists a proper subobject $X'$ of $X$, which is
semi-stable and of the same slope $\rho$. We can continue this
process on the $X'$, which has smaller rank. Finally we get a
stable object $Y$, and a short exact sequence $0\longrightarrow
Y\longrightarrow X\longrightarrow X/Y\longrightarrow 0$.
Therefore, $X$ has finite length in $\mathscr{H}^{(\rho)}$, and
the length is bounded by the rank $\text{rk}(X)$.
\end{proof}

\begin{Rem} If $\mathcal {X}_\mathscr{H}> 0$,
$\mathscr{H}^{(\rho)}$ is a semi-simple category. If $\mathcal
{X}_\mathscr{H}= 0$, we have the category equivalences
$\mathscr{H}^{(\rho)}\cong\mathscr{H}^{\infty}=\mathscr{H}_0.$
\end{Rem}

\subsection{The category $\mathscr{H}^{(\rho)}$}

From now on, we fix a rational number $\rho\in\mathbb{Q}$ and
consider the full subcategory $\mathscr{H}^{(\rho)}$ of
$\mathscr{H}$ which consists of all semi-stable bundles of slope
$\rho$. By Proposition \ref{semistablesubcat},
$\mathscr{H}^{(\rho)}$ is a hereditary abelian finitary length
category and the simples in $\mathscr{H}^{(\rho)}$ are the stable
sheaves of slope $\rho$. Namely each semi-stable sheaf in
$\mathscr{H}^{(\rho)}$ has a composition series of finite length,
and the composition factors are stable sheaves of the same slope
$\rho$. Let $I$ be the set of isomorphism classes of stable objects
in $\mathscr{H}^{(\rho)}$. Then the Grothendieck group
$K_0(\mathscr{H}^{(\rho)})$ is isomorphic to the free abelian group
$\mbz I$ generated by $I$. As in Section 3.1, we call the image in
$\mbz I$ of a semi-stable sheaf its {\em dimension vector}.

We aim to understand the indecomposable objects in
$\mathscr{H}^{(\rho)}$. The wild cases $\mathcal {X}_\mathscr{H}< 0$
are difficult and very little is known. However, we obtain the
classification of the dimension vectors of the indecomposable
objects with the help of Ringel-Hall algebras and generalized
Kac-Moody
Lie algebras. 

On the Grothendieck group $\mbz I$ are defined the Euler form
$\langle [X],[Y]\rangle$
=$\text{dim}_{k}\text{Hom}_{\mathscr{\mathscr{H}^{(\rho)}}}(X,Y)-
\text{dim}_{k}\text{Ext}_{\mathscr{\mathscr{H}^{(\rho)}}}^{1}(X,Y)$
and the symmetric Euler form $([X],[Y])=\langle
[X],[Y]\rangle+\langle [Y],[X]\rangle$, where $X$ and $Y$ are
objects in $\mathscr{H}^{(\rho)}$. By Proposition \ref{genKM} the
later is a generalized Kac-Moody bilinear form. By Remark
\ref{matrixandbilinearform}, it uniquely determines a
symmetrizable Borcherds-Cartan matrix, say $C_0$. Let $\mfkg(C_0)$
be the corresponding  generalized Kac-Moody Lie algebra (see
Definition \ref{generalized Kac-moody algebra}) and
$U_v(\mfkg(C_0))$ the corresponding quantized enveloping algebra
(see Definition \ref{quantizedgeneralizedKML}).

On the other hand, we can associate a Hopf algebra, the double
Ringel-Hall algebra $\mc{D}(\mathscr{H})$, to the category
$\mathscr{H}^{(\rho)}$ (see Section 3). As studied in Sections 5,
the composition subalgebra of $\mc{D}(\mathscr{H})$ is isomorphic to
the quantized enveloping algebra $U_v(\mfkg(C_0))$ (see Theorem
\ref{compositiontheorem}). Moreover, the double Ringel-Hall algebra
$\mc{D}(\mathscr{H})$ is a quotient of the quantized enveloping
algebra $U_v(\mfkg(C))$ of some larger generalized Kac-Moody algebra
$\mfkg(C)$ which is obtained by extending the torus of $\mfkg(C_0)$
(see Theorem \ref{doubletheorem}).

We write $\Phi^+$ for the set of dimension vectors of
indecomposable objects in $\mathscr{H}^{(\rho)}$. Let $\Delta_0^+$
and $W_0$ be respectively the set of positive roots and the Weyl
group of $\mfkg(C_0)$ (see Section 2 for definition). Let $I^{im}$
be the subset of $I$ containing of isomorphism classes of the
simple objects in $\mathscr{H}^{(\rho)}$ which have nontrivial
self-extensions. Indeed $I^{im}$ is the set of imaginary simple
roots of $\mfkg(C_0)$.

Finally, we have the following Corollary of Theorem
\ref{maintheoremtwo}, which relates the set $\Phi^+$ to the
positive roots of $\mfkg(C_0)$.

\begin{Cor} As subsets of $\mathbb{N}I$ we have
$\Phi^+ = \Delta_0^+ \cup W_0(\cup_{s\geq 2} sI^{im})$. Moreover
for each real root $\alpha \in \Delta_0^+$, there exits a unique
(up to isomorphism) indecomposable object with dimension vector
$\alpha$.
\end{Cor}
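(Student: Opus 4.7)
The plan is to observe that this Corollary is essentially a direct specialization of Theorem \ref{maintheoremtwo} to the particular category $\ma = \mathscr{H}^{(\rho)}$, so the main task is simply to verify that all hypotheses of the general theorem are satisfied and to identify the indexing data correctly.

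First, I would invoke Proposition \ref{semistablesubcat}, which tells us that $\mathscr{H}^{(\rho)}$ is a hereditary abelian finitary length $k$-category, with simple objects given by the stable sheaves of slope $\rho$. Thus the set $I$ of isomorphism classes of stable sheaves plays the role of the index set $\mci$ in Section 3, and the Grothendieck group $K_0(\mathscr{H}^{(\rho)}) = \mathbb{Z}I$ carries the symmetric Euler form, which by Lemma \ref{genKM} is a generalized Kac-Moody bilinear form. This form produces the Borcherds-Cartan matrix $C_0$ and the generalized Kac-Moody Lie algebra $\mfkg(C_0)$ with its root system $\Delta_0$ and Weyl group $W_0$, exactly as in the abstract framework.

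Next, I would identify the set $\mci^{im}$ appearing in Theorem \ref{maintheoremtwo} with the set $I^{im}$ as defined in the Corollary. Recall from the proof of Lemma \ref{genKM} that a simple object $S_i$ satisfies $(i,i)>0$ if and only if $\mathrm{Ext}^1_{\mathscr{H}^{(\rho)}}(S_i,S_i)=0$; equivalently, $i \in \mci^{im}$ (i.e.\ $(i,i)\leq 0$) if and only if the stable sheaf $S_i$ has nontrivial self-extensions in $\mathscr{H}^{(\rho)}$. This matches exactly the definition of $I^{im}$ in the preceding paragraph, so $\mci^{im} = I^{im}$ as subsets of $I$.

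With these identifications in place, Theorem \ref{maintheoremtwo} applied to $\ma = \mathscr{H}^{(\rho)}$ yields $\Phi^+ = \Delta_0^+ \cup W_0(\cup_{s\geq 2} s I^{im})$ as subsets of $\mathbb{N}I$, together with the uniqueness (up to isomorphism) of the indecomposable object corresponding to each real root $\alpha \in \Delta_0^+$. There is no real obstacle here; the work has all been done in the general framework of Sections 3 and 4. The only thing to be cautious about is confirming that the Ringel-Hall / double Ringel-Hall machinery set up for an abstract hereditary abelian finitary length category indeed applies to $\mathscr{H}^{(\rho)}$ without modification, which is immediate from Proposition \ref{semistablesubcat}.
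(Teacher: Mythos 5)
Your proposal is correct and follows exactly the route the paper intends: the Corollary is stated as a direct specialization of Theorem \ref{maintheoremtwo} to $\ma = \mathscr{H}^{(\rho)}$, with the hypotheses supplied by Proposition \ref{semistablesubcat} and the identification $\mci^{im}=I^{im}$ coming from the Euler-form characterization of self-extensions in Lemma \ref{genKM}. No further comment is needed.
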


Hence, for a fixed slop $\rho$, the dimension vector of an
indecomposable semi-stable sheaf of slope $\rho$ is either a
positive root of the generalized Kac-Moody algebra $\mfkg(C_0)$,
or a imaginary root lying in $W_0(\cup_{s\geq 2} sI^{im})$. By
definition the positive roots of $\mfkg(C_0)$ is obtained by
applying the Weyl group $W_0$ to the real simple roots
$I^{re}=I\backslash I^{im}$. Hence the set $\Phi^+ = \Delta_0^+
\cup W_0(\cup_{s\geq 2} sI^{im})$ can be computed if the Euler
form is given.

\begin{Rem}
Suppose $\mathscr{H}$ is not weighted. Let $X,Y$ be the stable
bundles in $\mathscr{H}^{(\rho)}$, then the Euler form $\langle
X,Y\rangle=(1-g_0){\rm rk}(X){\rm rk}(Y)$.
\end{Rem}

\bigskip

\end{document}